\documentclass[12pt]{article}


\usepackage{amsmath}
\usepackage{amsthm}
\usepackage{amssymb}
\usepackage{cite}
\usepackage{url}
\usepackage[multiple]{footmisc}
\usepackage{graphicx}
\usepackage{epstopdf}
\usepackage{chngcntr}
\usepackage{enumitem}
\usepackage{hhline}
\usepackage{array}
\usepackage{hyperref}
\usepackage{enumitem}
\usepackage{color}

\hypersetup{colorlinks=false}


\voffset-1.1cm
\setlength\textwidth{15.6cm}
\setlength\textheight{22.4cm}

\lefthyphenmin=2 
\righthyphenmin=3

\if@twoside
  \setlength\oddsidemargin{.45cm}
  \setlength\evensidemargin{.05cm}
\else 
  \setlength\oddsidemargin{.45cm}
  \setlength\evensidemargin{.45cm}
\fi 

\numberwithin{equation}{section}
\counterwithin{figure}{section}
\counterwithin{table}{section}


\theoremstyle{plain}
\newtheorem{theorem}{Theorem}[section]
\newtheorem{lemma}[theorem]{Lemma}
\newtheorem{proposition}[theorem]{Proposition}
\newtheorem{corollary}[theorem]{Corollary}

\theoremstyle{definition}
\newtheorem{definition}{Definition}[section]

\newtheorem{example}{Example}[section]

\newtheorem{assumption}{Assumption}[section]

\theoremstyle{remark}
\newtheorem*{remark}{Remark}




\newcommand{\norm}[1]{\left\|#1\right\|}
\newcommand{\abs}[1]{\left\vert#1\right\vert}
\newcommand{\spr}[1]{\left\langle\,#1\,\right\rangle}
\newcommand{\kl}[1]{\left(#1\right)}


\newcommand{\LandauO}{\mathcal{O}}
\newcommand{\Landauo}{\textnormal{\scriptsize{$\mathcal{O}$}}}

\newcommand{\R}{\mathbb{R}} 

\newcommand{\N}{\mathbb{N}}
\newcommand{\Z}{\mathbb{Z}}



\newcommand{\X}{\mathcal{X}}
\newcommand{\Y}{\mathcal{Y}}


\newcommand{\xD}{x^\dagger}
\newcommand{\xs}{x_*}
\newcommand{\xd}{x^\delta}

\newcommand{\xad}{x_\alpha^\delta}
\newcommand{\xkd}{x_k^\delta}
\newcommand{\xkmd}{x_{k-1}^\delta}

\newcommand{\xkpd}{x_{k+1}^\delta}
\newcommand{\xmod}{x_{-1}^\delta}
\newcommand{\xksd}{x_{k_*}^\delta}
\newcommand{\xksdt}{\tilde{x}_{k_*}^{\delta}}

\newcommand{\xkndnt}{x_{\tilde{k}_n}^{\tilde{\delta}_n}}

\newcommand{\xt}{\tilde{x}}
\newcommand{\xzd}{x^\delta_0}
\newcommand{\xkz}{x_k^0}
\newcommand{\xkpz}{x_{k+1}^0}


\newcommand{\zkd}{z_{k}^\delta}
\newcommand{\zkz}{z_{k}^0}

\newcommand{\skd}{s_k^\delta}

\newcommand{\yd}{y^{\delta}}
\newcommand{\ydn}{y^{\delta_n}}

\newcommand{\D}{\mathcal{D}}

\newcommand{\Range}{\mathcal{R}}

\newcommand{\eps}{\varepsilon}
\newcommand{\Br}{\mathcal{B}_\rho}
\newcommand{\Btr}{\mathcal{B}_{2\rho}}
\newcommand{\mr}{6\rho}
\newcommand{\Bmr}{\mathcal{B}_{\mr}}
\newcommand{\ob}{\bar{\omega}}
\newcommand{\foh}{\tfrac{1}{2}}

\newcommand{\lka}{\tfrac{k-1}{k + \alpha - 1}}
\newcommand{\Phid}{\Phi^\delta}
\newcommand{\Phiz}{\Phi^0}

\newcommand{\Thetad}{\Theta^\delta}
\newcommand{\Thetaz}{\Theta^0}
\newcommand{\prox}[1]{\text{prox}_{\omega \Psi}\kl{#1}}

\newcommand{\Ed}{\mathcal{E}^\delta}
\newcommand{\Ez}{\mathcal{E}^0}
\newcommand{\wkd}{w^\delta_k}
\newcommand{\wkz}{w^0_k}
\newcommand{\wkpd}{w^\delta_{k+1}}
\newcommand{\Go}{G_{\omega}}
\newcommand{\God}{G_{\omega}^\delta}
\newcommand{\Goz}{G_{\omega}^0}

\newcommand{\Dd}{\Delta(\delta)}
\newcommand{\ks}{{k_*}}

\newcommand{\knt}{{\tilde{k}_n}}

\newcommand{\ak}{\alpha_k}
\newcommand{\akd}{\alpha_k^\delta}
\newcommand{\lkd}{\lambda^\delta_k}

\newcommand{\Tad}{\mathcal{T}_\alpha^\delta}

\renewcommand{\S}{\mathcal{S}}

\newcommand{\dn}{{\delta_n}}
\newcommand{\dnt}{{\tilde{\delta}_n}}

\newcommand{\Mt}{\mathcal{M}_\tau}

\newcommand{\bd}{\bar{\delta}}



\newcommand{\lt}{{\ell^2}}

\newcommand{\xv}{\vec{x}}

\newcommand{\LtO}{L^2(0,1)}
\newcommand{\HoO}{H^1(0,1)}
\newcommand{\Lt}{{L^2}}
\newcommand{\ek}{e^{(k)}}
\newcommand{\LN}{{\Lambda_N}}


\title{Nesterov's Accelerated Gradient Method for Nonlinear Ill-Posed Problems with a Locally Convex Residual Functional}
\author{
Simon Hubmer\footnote{Johannes Kepler University Linz, Doctoral Program Computational Mathematics, Altenbergerstra{\ss}e 69, A-4040 Linz, Austria (simon.hubmer@dk-compmath.jku.at)},
Ronny Ramlau\footnote{Johannes Kepler University Linz, Institute of Industrial Mathematics, Altenbergerstra{\ss}e 69, A-4040 Linz, Austria (ronny.ramlau@jku.at)} \footnote{Johann Radon Institute Linz, Altenbergerstra{\ss}e 69, A-4040 Linz, Austria (ronny.ramlau@ricam.oeaw.ac.at)}}

\begin{document}

\maketitle

\begin{abstract}
In this paper, we consider Nesterov's Accelerated Gradient method for solving Nonlinear Inverse and Ill-Posed Problems. Known to be a fast gradient-based iterative method for solving well-posed convex optimization problems, this method also leads to promising results for ill-posed problems. Here, we provide a convergence analysis for ill-posed problems of this method based on the assumption of a locally convex residual functional. Furthermore, we demonstrate the usefulness of the method on a number of numerical examples based on a nonlinear diagonal operator and on an inverse problem in auto-convolution.

\medskip
\noindent \textbf{Keywords:} Nesterov's Accelerated Gradient Method, Landweber Iteration, Two-Point Gradient Method, Regularization Method, Inverse and Ill-Posed Problems, Auto-Convolution

\medskip
\noindent \textbf{AMS:} 65J15, 65J20, 65J22
\end{abstract}

\section{Introduction}
In this paper, consider nonlinear inverse problems of the form
\begin{equation}\label{Fx=y}
F(x) = y \,,
\end{equation}
where $F: \D(F) \subset \X \to \Y$ is a continuously Fr\'echet-differentiable, nonlinear operator between real Hilbert spaces $\X$ and $\Y$. Throughout this paper we assume that \eqref{Fx=y} has a solution $x_*$, which need not be unique. Furthermore, we assume that instead of $y$, we are only given noisy data $\yd$ satisfying
\begin{equation}\label{noisy_data}
\norm{y - \yd} \leq \delta \,.
\end{equation}
Since we are interested in ill-posed problems, we need to use regularization methods in order to obtain stable approximations of solutions of \eqref{Fx=y}. The two most prominent examples of such methods are \emph{Tikhonov regularization} and \emph{Landweber iteration}. 

In \emph{Tikhonov regularization}, one attempts to approximate an $x_0$-minimum-norm solution $\xD$ of \eqref{Fx=y}, i.e., a solution of $F(x)=y$ with minimal distance to a given initial guess $x_0$, by minimizing the functional
\begin{equation}\label{Tikhonov}
\Tad(x) := \norm{F(x)-\yd}^2 + \alpha \norm{x-x_0}^2 \,,
\end{equation}
where $\alpha$ is a suitably chosen regularization parameter. Under very mild assumptions on $F$, it can be shown that the minimizers of $\Tad$, usually denoted by $\xad$, converge subsequentially to a minimum norm solution $\xD$ as $\delta \to 0$, given that $\alpha$ and the noise level $\delta$ are coupled in an appropriate way \cite{Engl_Hanke_Neubauer_1996}. While for linear operators $F$ the minimization of $\Tad$ is straightforward, in the case of nonlinear operators $F$ the computation of $\xad$ requires the global minimization of the then also nonlinear functional $\Tad$, which is rather difficult and usually done using various iterative optimization algorithms.

This motivates the direct application of iterative algorithms for solving \eqref{Fx=y}, the most popular of which being \emph{Landweber iteration}, given by
\begin{equation}\label{Landweber}
\begin{split}
\xkpd &= \xkd + \omega F'(\xkd)^*(\yd - F(\xkd)) \,,
\\
\xzd &= x_0 \,,
\end{split}
\end{equation}
where $\omega$ is a scaling parameter and $x_0$ is again a given initial guess. Seen in the context of classical optimization algorithms, Landweber iteration is nothing else than the gradient descent method applied to the functional 
\begin{equation}\label{def_phi}
\Phid(x) := \foh \norm{F(x) - \yd}^2 \,,
\end{equation}
and therefore, in order to arrive at a convergent regularization method, one has to use a suitable stopping rule. In \cite{Engl_Hanke_Neubauer_1996} it was shown that if one uses the discrepancy principle, i.e., stops the iteration after $k_*$ steps, where $k_*$ is the smallest integer such that
\begin{equation}\label{discrepancy_principle}
\norm{\yd - F(\xksd)} \leq \tau \delta < \norm{\yd - F(\xkd)} \,, \qquad 0 \leq k < k_* \,,
\end{equation}
with a suitable constant $\tau > 1$, then Landweber iteration gives rise to a convergent regularization method, as long as some additional assumptions, most notably the (strong) tangential cone condition 
\begin{equation}\label{cond_nonlin}
\begin{split}
&\norm{F(x)-F(\xt)-F'(x)(x-\xt)} \leq \eta \norm{F(x) - F(\xt)} \,, \qquad \eta < \foh
\\
& x,\xt \in \Btr(x_0) \,,
\end{split} 
\end{equation}
where $\Btr(x_0)$ denotes the closed ball of radius $2 \rho $ around $x_0$, is satisfied. Since condition \eqref{cond_nonlin} poses strong restrictions on the nonlinearity of $F$ which are not always satisfied, attempts have been made to use weaker conditions instead \cite{Scherzer_1995}. For example, assuming only the weak tangential cone condition
\begin{equation}\label{cond_nonlin_weak}
\begin{split}
\spr{F(x)-F(\xs) - F'(x)(x-\xs),F(x)-F(\xs)} \leq \eta \norm{F(x)-F(\xs)}^2 
\\
\forall x \in \Br(x_0) \,, \qquad 0 < \eta < 1 \,, 
\end{split}
\end{equation}
to hold, one can show weak convergence of Landweber iteration \cite{Scherzer_1995}. Similarly, if the residual functional $\Phiz(x)$ defined by \eqref{def_phi} is (locally) convex, weak subsequential convergence of the iterates of Landweber iteration to a stationary point of $\Phiz(x)$ can be proven. Even though they both lead to convergence in the weak topology, besides some results presented in \cite{Scherzer_1995}, the connections between the local convexity of the residual functional and the (weak) tangential cone condition remain largely unexplored. In his recent paper \cite{Kindermann_2017}, Kindermann showed that both the local convexity of the residual functional and the weak tangential cone condition imply another condition, which he termed $NC(0,\beta>0)$, and which is sufficient to guarantee weak subsequential convergence of the iterates.

As is well known, Landweber iteration is quite slow \cite{Kaltenbacher_Neubauer_Scherzer_2008}. Hence, acceleration strategies have to be used in order to speed it up and make it applicable in practise. Acceleration methods and their analysis for linear problems can be found for example in \cite{Engl_Hanke_Neubauer_1996} and \cite{Hanke_1991}. Unfortunately, since their convergence proofs are mainly based on spectral theory, their analysis cannot be generalized to nonlinear problems immediately. However, there are some acceleration strategies for Landweber iteration for nonlinear ill-posed problems, for example \cite{Neubauer_2000, Ramlau_1999}.

As an alternative to (accelerated) Landweber-type methods, one could think of using second order iterative methods for solving \eqref{Fx=y}, such as the Levenberg-Marquardt method \cite{Hanke_1997,Jin_2010}
\begin{equation}\label{LM}
\xkpd = \xkd + (F'(\xkd)^*F'(\xkd) + \ak I)^{-1} F'(\xkd)^*(\yd - F(\xkd)) \,,
\end{equation}
or the iteratively regularized Gauss-Newton method \cite{Blaschke_Neubauer_Scherzer_1997,Jin_Tautenhahn_2009}
\begin{equation}\label{irGN}
\xkpd = \xkd + (F'(\xkd)^*F'(\xkd) + \ak I)^{-1}( F'(\xkd)^*(\yd - F(\xkd)) + \ak(x_0 - \xkd)) \,.
\end{equation} 
The advantage of those methods \cite{Kaltenbacher_Neubauer_Scherzer_2008} is that they require much less iterations to meet their respective stopping criteria compared to Landweber iteration or the steepest descent method. However, each update step of those iterations might take considerably longer than one step of Landweber iteration, due to the fact that in both cases a linear system involving the operator
\begin{equation*}
F'(\xkd)^*F'(\xkd) + \ak I
\end{equation*} 
has to be solved. In practical applications, this usually means that a huge linear system of equations has to be solved, which often proves to be costly, if not infeasible. Hence, accelerated Landweber type methods avoiding this drawback are desirable in practise.

In case that the residual functional $\Phid(x)$ is locally convex, one could think of using methods from convex optimization to minimize $\Phid(x)$, instead of using the gradient method like in Landweber iteration. One of those methods, which works remarkably well for nonlinear, convex and well-posed optimization problems of the form
\begin{equation}\label{prob_conv_min}
\min\{\Phi(x) \, \vert \, x \in \X \} 
\end{equation}
was first introduced by Nesterov in \cite{Nesterov_1983} and is given by
\begin{equation}\label{N_1}
\begin{split}
z_k &= x_k + \tfrac{k-1}{k + \alpha - 1} (x_k - x_{k-1}) \,,
\\
x_{k+1} &= z_k - \omega(\nabla \Phi(z_k))  \,,
\end{split}
\end{equation}
where again $\omega$ is a given scaling parameter and $\alpha \geq 3$ (with $\alpha = 3$ being common practise). This so-called \emph{Nesterov acceleration scheme} is of particular interest, since not only is it extremely easy to implement, but Nesterov himself was also able to prove that it generates a sequence of iterates $x_k$ for which there holds 
\begin{equation}\label{res_speed_1}
\norm{\Phi(x_k) - \Phi(x_*)} = \LandauO(k^{-2}) \,,
\end{equation}
where $x_*$ is any solution of \eqref{prob_conv_min}. This is a big improvement over the classical rate $\LandauO(k^{-1})$. The even further improved rate $\Landauo(k^{-2})$ for $\alpha > 3$ was recently proven in \cite{Attouch_Peypouquet_2016}.

Furthermore, Nesterov's acceleration scheme can also be used to solve compound optimization problems of the form
\begin{equation}\label{prob_min_comp}
\min\{\Phi(x) + \Psi(x) \, \vert \, x \in \X \} \,,
\end{equation}
where both $\Phi(x)$ and $\Psi(x)$ are convex functionals, and is in this case given by
\begin{equation}\label{N_2}
\begin{split}
z_k &= x_k + \tfrac{k-1}{k + \alpha - 1} (x_k - x_{k-1}) \,,
\\
x_{k+1} &= \prox{z_k - \omega(\nabla \Phi(z_k)) }  \,,
\end{split}
\end{equation}
where the proximal operator $\prox{.}$ is defined by
\begin{equation}\label{def_prox}
\prox{x} := \arg\min_u \left\{\omega \Psi(u) + \foh \norm{x - u}^2 \right\}\,.
\end{equation}
If in addition to being convex, $\Psi$ is proper and lower-semicontinous and $\Phi$ is continuously Fr\'echet differentiable with a Lipschitz continuous gradient, then it was again shown in \cite{Attouch_Peypouquet_2016} that the sequence defined by \eqref{N_2} satisfies
\begin{equation}\label{res_speed_2}
\norm{(\Phi - \Psi)(x_k) - (\Phi-\Psi)(\xs) } = \LandauO(k^{-2}) \,,
\end{equation}
or even $\Landauo(k^{-2})$ if $\alpha > 3$, which is again much faster than ordinary first order methods for minimizing \eqref{prob_min_comp}. This accelerating property was exploited in the highly successful FISTA algorithm \cite{Beck_Teboulle_2009}, designed for the fast solution of linear ill-posed problems with sparsity constraints. Since for linear operators the residual functional $\Phid$ is globally convex, minimizing the resulting Tikhonov functional \eqref{Tikhonov} exactly fits into the category of minimization problems considered in \eqref{N_2}.   

Motivated by the above considerations, one could think of applying Nesterov's acceleration scheme \eqref{N_1} to the residual functional $\Phid$, which leads to the algorithm
\begin{equation}\label{Nesterov_1}
\begin{split}
\zkd & = \xkd + \tfrac{k - 1}{k + \alpha - 1}(\xkd  - \xkmd) \,,
\\
\xkpd & = \zkd + \omega F'(\zkd)^*(\yd-F(\zkd)) \,,
\\
\xzd &= \xmod = x_0 \,.
\end{split}
\end{equation}
In case that the operator $F$ is linear, Neubauer showed in \cite{Neubauer_2017} that, combined with a suitable stopping rule and under a source condition, \eqref{Nesterov_1} gives rise to a convergent regularization method and that convergence rates can be obtained. Furthermore, the authors of \cite{Hubmer_Ramlau_2017} showed that certain generalizations of Nesterov's acceleration scheme, termed \emph{Two-Point Gradient (TPG) methods} and given by
\begin{equation}\label{Two_Point}
\begin{split}
\zkd &= \xkd + \lkd(\xkd - \xkmd) \,,
\\
\xkpd &= \zkd + \akd\skd \,, \qquad \skd := F'(\zkd)^*(\yd - F(\zkd)) \,,
\\
\xzd &= \xmod = x_0 \,,
\end{split}
\end{equation}
give rise to convergent regularization methods, as long as the tangential cone condition ~\eqref{cond_nonlin} is satisfied and the stepsizes $\akd$ and the combination parameters $\lkd$ are coupled in a suitable way. However, the convergence analysis of the methods \eqref{Two_Point} does not cover the choice
\begin{equation}\label{lambda_N}
\lkd = \frac{k-1}{k+\alpha-1} \,,
\end{equation} 
i.e., the choice originally proposed by Nesterov and the one which shows by far the best results numerically \cite{Hubmer_Neubauer_Ramlau_Voss_2018, Hubmer_Ramlau_2017,Jin_2016}. The main reason for this is that the techniques employed there works with the monotonicity of the iteration, i.e., the iterate $\xkpd$ always has to be a better approximation of the solution $\xs$ than $\xkd$, which is not necessarily satisfied for the choice \eqref{lambda_N}. 

The key ingredient for proving the fast rates \eqref{res_speed_1} and \eqref{res_speed_2} is the convexity of the residual functional $\Phi(x)$. Since, except for linear operators, we cannot hope that this holds globally, we assume that $\Phiz(x)$, i.e., the functional $\Phid(x)$ defined by \eqref{def_phi} with exact data $y = \yd$, corresponding to $\delta = 0$, is convex in a neighbourhood of the initial guess. This neighbourhood has to be sufficiently large encompassing the sought solution $\xs$, or equivalently, the initial guess $x_0$ has to be sufficiently close to the solution $\xs$. Assuming that $F(x) =y$ has a solution $\xs$ in $\Br(x_0)$, where now and in the following, $\Br(x_0)$ denotes the closed ball with radius $\rho$ around $x_0$, the key assumption is that $\Phiz$ is convex in $\Bmr(x_0)$. As mentioned before, Nesterov's acceleration scheme yields a non-monotonous sequence of iterates, which might possible leave the ball $\Bmr(x_0)$. However, by assumption the sought for solution $\xs$ lies in the ball $\Br(x_0)$. Hence, defining the functional 
\begin{equation}\label{def_psi}
\Psi(x) :=
\begin{cases}
0 \,, & x \in \Btr(x_0) \,,
\\
\infty \,, & x \notin \Btr(x_0) \,,
\end{cases}
\end{equation}
we can, instead of using \eqref{N_1}, which would lead to algorithm \eqref{Nesterov_1}, use \eqref{N_2}, noting that still the fast rate \eqref{res_speed_2} can be expected for $\delta=0$. This leads to the algorithm
\begin{equation}\label{Nesterov}
\begin{split}
\zkd &= \xkd + \lka(\xkd - \xkmd) \,,
\\
\xkpd &= \prox{\zkd + \omega F'(\zkd)^*(\yd-F(\zkd))} \,,
\\
\xd_0 &= \xd_{-1} = x_0 \,,
\end{split}
\end{equation}
which we consider throughout this paper.

\section{Convergence Analysis I}

In this section we provide a convergence analysis of Nesterov's accelerated gradient method \eqref{Nesterov}. Concerning notation, whenever we consider the noise-free case $y = \yd$ corresponding to $\delta = 0$, we replace $\delta$ by $0$ in all variables depending on $\delta$, e.g., we write $\Phiz$ instead of $\Phid$. For carrying out the analysis, we have to make a set of assumptions, already indicated in the introduction.
\begin{assumption}\label{ass_main}
Let $\rho$ be a positive number such that $\Bmr(x_0) \subset \D(F)$.
\begin{enumerate}
\item The operator $F : \D(F) \subset \X \to \Y$ is continuously Fr\'echet differentiable between the real Hilbert spaces $\X$ and $\Y$ with inner products $\spr{.,.}$ and norms $\norm{.}$. Furthermore, let $F$ be weakly sequentially closed on $\Btr(x_0)$.
\item The equation $F(x) = y$ has a solution $\xs \in \Br(x_0)$.
\item The data $\yd$ satisfies $\norm{y-\yd} \leq \delta$.
\item The functional $\Phi^0$ defined by \eqref{def_phi} with $\delta = 0$ is convex and has a Lipschitz continuous gradient $\nabla \Phi^0$ with Lipschitz constant $L$ on $\Bmr(x_0)$, i.e.,
\begin{equation}\label{cond_convex}
\Phi^0(\lambda x_1 + (1-\lambda) x_2) \leq \lambda \Phi^0(x_1) + (1-\lambda) \Phi^0(x_2)  
\,, 
\qquad \forall x_1, x_2 \in \Bmr(x_0) \,,
\end{equation} 
\begin{equation}\label{cond_L_grad}
\norm{\nabla \Phi^0(x_1) - \nabla \Phi^0(x_2)} \leq L \norm{x_1 - x_2}
\,,
\qquad \forall x_1, x_2 \in \Bmr(x_0) \,.
\end{equation}
\item For $\alpha$ in \eqref{Nesterov} there holds $\alpha > 3$ and the scaling parameter $\omega$ satisfies $0 < \omega < \frac{1}{L}$.
\end{enumerate}
\end{assumption}

Note that since $\Btr(x_0)$ is weakly closed and given the continuity of $F$, a sufficient condition for the weak sequential closedness assumption to hold is that $F$ is compact.

We now turn to the convergence analysis of Nesterov's accelerated gradient method \eqref{Nesterov}. Throughout this analysis, if not explicitly stated otherwise, Assumption~\ref{ass_main} is in force. Note first that from $F$ being continuously Fr\'echet differentiable, we can derive that there exists an $\ob$ such that
\begin{equation}\label{cond_bounded}
\norm{F'(x)} \leq \ob \,, \qquad \forall x \in \Bmr(x_0) \,.
\end{equation}

Next, note that since $\Btr(x)$ denotes a closed ball around $x$, the functional $\Psi$, in addition to being proper and convex, is also lower-semicontinous, an assumption required in the proofs in \cite{Attouch_Peypouquet_2016}, which we need in various places of this paper. Furthermore, it immediately follows from the definition \eqref{def_prox} of the proximal operator $\prox{.}$ that 
\begin{equation}\label{prox_proj}
\begin{split}
\prox{x} = \arg\min_{u \in \X} \left\{\omega \Psi(u) + \foh \norm{x - u}^2 \right\}
=\arg \hspace{-10pt}\min_{u \in \Btr(x_0)} \left\{\foh \norm{x - u}^2 \right\} \,,
\end{split}
\end{equation}  
since $\Psi$ defined by \eqref{def_psi} is equal to $\infty$ outside $\Btr(x_0)$. Hence, since obviously $\Btr(x_0)$ is a convex set, $\prox{.}$ is nothing else than the metric projection onto $\Btr(x_0)$, and is therefore Lipschitz continuous with Lipschitz constant smaller or equal to $1$. Consequently, given an estimate of $\rho$, the implementation of $\prox{.}$ is exceedingly simple in this setting, and therefore, one iteration step of \eqref{Nesterov} and \eqref{Landweber} require roughly the same amount of computational effort.

Finally, note that due to the convexity of $\Phiz$, the set $S$ defined by
\begin{equation}\label{def_S}
\S := \left\{ x \in \Btr(x_0) \, \vert \, F(x) = y   \right\} \,,
\end{equation}
is a convex subset of $\Btr(x_0)$ and hence, there exists a unique $x_0$-minimum-norm solution $\xD$, which is defined by
\begin{equation}\label{def_xD}
\xD := \arg \min_{x \in \S }  \norm{x - x_0} \,,
\end{equation}
which is nothing else than the orthogonal projection of $x_0$ onto the set $\S$.

The following convergence analysis is largely based on the ideas of the paper \cite{Attouch_Peypouquet_2016} of Attouch and Peypouquet, which we reference from frequently throughout this analysis. Following their arguments, we start by making the following
\begin{definition}\label{Def_stuff_1}
For $\Phid$ and $\Psi$ defined by \eqref{def_phi} and $\eqref{def_psi}$, we define
\begin{equation}\label{def_theta}
\Thetad (x) := \Phid(x) + \Psi(x) \,.
\end{equation}
The energy functional $\Ed $ is defined by
\begin{equation}\label{def_E}
\Ed(k) := \frac{2 \omega}{\alpha-1} (k+\alpha-2)^2 (\Thetad(\xkd)-\Thetad(\xs)) + (\alpha - 1)\norm{\wkd-\xs}^2 \,,
\end{equation}
where the sequence $\wkd$ is defined by
\begin{equation}\label{def_w}
\wkd := \frac{k+\alpha-1}{\alpha-1}\zkd - \frac{k}{\alpha-1} \xkd
= \xkd + \frac{k-1}{\alpha-1}\left(\xkd - \xkmd \right) \,.
\end{equation}
Furthermore, we introduce the operator $\God : \D(F) \subset \X \to \Y$, given by
\begin{equation}\label{def_G}
\God(x) := \frac{1}{\omega}\left(x - \prox{x - \omega \nabla \Phid(x)}\right) \,.
\end{equation}
\end{definition}

Using Definition~\ref{Def_stuff_1}, we can now write to update step for $\xkpd$ in the form
\begin{equation*}
\xkpd = \zkd - \omega \, \God(\zkd) \,,
\end{equation*}
and furthermore, it is possible to write
\begin{equation}\label{eq_wk}
\wkpd = \frac{k+\alpha-1}{\alpha-1}\left( \zkd - \omega \, \God(\zkd) \right) - \frac{k}{\alpha-1} \xkd
= 
\wkd - \frac{\omega}{\alpha-1} (k+\alpha-1) \God(\zkd)\,.
\end{equation}
As a first result, we show that both $\zkd$ and $\xkd$ stay within $\Bmr(x_0)$ during the iteration.
\begin{lemma}\label{lem_within}
Under the Assumption~\ref{ass_main}, the sequence of iterates $\xkd$ and $\zkd$ defined by \eqref{Nesterov} is well-defined. Furthermore, $\xkd \in \Btr(x_0)$ and $\zkd \in \Bmr(x_0)$ for all $k \in \N$.
\end{lemma}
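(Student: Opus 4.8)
The plan is to proceed by induction on $k$, exploiting the structural fact established just before the lemma: since $\prox{.}$ coincides with the metric projection onto the closed convex set $\Btr(x_0)$, it is well-defined on all of $\X$ and its output always lies in $\Btr(x_0)$. Because of this, the membership $\xkd \in \Btr(x_0)$ will come essentially for free from the update formula \eqref{Nesterov}, and the real content of the proof is to control the extrapolated point $\zkd$ and thereby guarantee that the gradient step is well-defined.

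First I would fix the induction hypothesis $\xkd, \xkmd \in \Btr(x_0)$. The base case $k=0$ is immediate, since $\xzd = \xmod = x_0$ trivially lie in $\Btr(x_0)$ and yield $z_0^\delta = x_0 \in \Bmr(x_0)$. For the inductive step I would start from
\begin{equation*}
\norm{\zkd - x_0} \leq \norm{\xkd - x_0} + \lka \norm{\xkd - \xkmd} \,,
\end{equation*}
bound $\norm{\xkd - \xkmd} \leq \norm{\xkd - x_0} + \norm{\xkmd - x_0} \leq 4\rho$ via the triangle inequality and the induction hypothesis, and invoke the crucial bound $0 \leq \lka < 1$ to conclude $\norm{\zkd - x_0} \leq 2\rho + 4\rho = 6\rho$, that is, $\zkd \in \Bmr(x_0)$.

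Once $\zkd \in \Bmr(x_0) \subset \D(F)$ is secured, well-definedness follows directly: by Assumption~\ref{ass_main} the quantities $F(\zkd)$ and $F'(\zkd)$ exist, so the argument $\zkd + \omega F'(\zkd)^*(\yd - F(\zkd)) = \zkd - \omega \nabla \Phid(\zkd)$ is a well-defined element of $\X$, and applying $\prox{.}$ produces a unique $\xkpd \in \Btr(x_0)$. This closes the induction and simultaneously supplies $\xkpd, \xkd \in \Btr(x_0)$ as the hypothesis for the next step.

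I do not expect a genuine obstacle here; the proof is a short induction whose only subtle point is the treatment of the coefficient $\lka$. It is negative at $k=0$, but this is harmless since it then multiplies the zero vector $\xzd - \xmod$; and for $k \geq 1$ one has $0 \leq \lka < 1$ because $\alpha > 3$, so that $\lka$ may be bounded by $1$ uniformly in the estimate above. This strict bound, combined with the radius gap between $\Btr(x_0)$ and $\Bmr(x_0)$, is precisely what keeps the extrapolated iterate $\zkd$ inside the larger ball on which $\Phiz$ is assumed convex with Lipschitz continuous gradient.
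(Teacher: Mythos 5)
Your proof is correct and follows essentially the same route as the paper: induction on $k$, the observation that $\prox{.}$ maps into $\Btr(x_0)$ by construction, and a triangle-inequality estimate with $\abs{\lka} \leq 1$ giving $\norm{\zkd - x_0} \leq \mr$. The only differences are cosmetic (you split the extrapolation term as $\norm{\xkd - x_0} + \lka\norm{\xkd - \xkmd}$ rather than the paper's $(1+\lka)\norm{\xkd - x_0} + \lka\norm{\xkmd - x_0}$, and you make explicit the harmless negativity of $\lka$ at $k=0$, which the paper leaves implicit).
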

\begin{proof}
This follows by induction from $\xd_0 = \xd_{-1} = x_0 \in \Br(x_0)$, the observation
\begin{equation*}
\begin{split}
\norm{\zkd - x_0} &\leq (1+\lka)\norm{\xkd - x_0} + \lka \norm{\xkmd - x_0} 
\\
&\leq 2\norm{\xkd - x_0} + \norm{\xkmd - x_0} \,,
\end{split}
\end{equation*}
and the fact that by the definition of $\prox{x}$, $\xkd$ is always an element of $\Btr(x_0)$.
\end{proof}

Since the functional $\Theta^0$ is assumed to be convex in $\Bmr(x_0)$, we can deduce:
\begin{lemma}\label{lem_main_zero}
Under Assumption~\ref{ass_main}, for all $x,z \in \Bmr(x_0)$ there holds
\begin{equation*}
\Theta^0(z-\omega \Go^0(z)) \leq \Theta^0(x)+ \spr{\Go^0(z),z-x} - \frac{\omega}{2} \norm{\Go^0(z)}^2 \,.
\end{equation*}
\end{lemma}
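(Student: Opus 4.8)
The plan is to adapt the standard ``proximal-gradient descent lemma'' (as used in \cite{Beck_Teboulle_2009} and \cite{Attouch_Peypouquet_2016}) to the present localized setting, by combining three elementary inequalities: the subgradient characterization of the proximal step, the descent estimate coming from the Lipschitz gradient, and the first-order convexity inequality for $\Phiz$. To organize this, write $G := \Goz(z)$ and $x^+ := z - \omega G$, so that by \eqref{def_G} one has $x^+ = \prox{z - \omega \nabla \Phi^0(z)}$ and the claim reads
\[
\Thetaz(x^+) \leq \Thetaz(x) + \spr{G, z-x} - \tfrac{\omega}{2}\norm{G}^2 \,.
\]
By \eqref{prox_proj}, $x^+$ is the metric projection of $z - \omega \nabla \Phi^0(z)$ onto $\Btr(x_0)$, hence $x^+ \in \Btr(x_0) \subset \Bmr(x_0)$; in particular $\Psi(x^+) = 0$ is finite. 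If $x \notin \Btr(x_0)$ then $\Psi(x) = \infty$, the right-hand side equals $+\infty$, and the inequality is trivial, so I may henceforth assume $x \in \Btr(x_0)$.

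First I would extract the subgradient information from the proximal step. The first-order optimality condition for the minimization in \eqref{def_prox} defining $x^+$ gives $z - \omega \nabla \Phi^0(z) - x^+ \in \omega\, \partial \Psi(x^+)$, and since $z - x^+ = \omega G$ this rearranges to $G - \nabla \Phi^0(z) \in \partial \Psi(x^+)$. The subgradient inequality for the convex functional $\Psi$ then yields
\[
\Psi(x^+) \leq \Psi(x) + \spr{G - \nabla \Phi^0(z),\, x^+ - x} \,.
\]
Next I would invoke the two properties of $\Phiz$ on $\Bmr(x_0)$. The Lipschitz continuity \eqref{cond_L_grad} of $\nabla \Phi^0$, together with the convexity of $\Bmr(x_0)$ (so the segment $[z,x^+]$ stays inside the region where \eqref{cond_L_grad} holds, justifying the integral form of the descent lemma), gives
\[
\Phiz(x^+) \leq \Phiz(z) - \omega \spr{\nabla \Phi^0(z), G} + \tfrac{L \omega^2}{2}\norm{G}^2 \,,
\]
while the convexity \eqref{cond_convex} of $\Phiz$ supplies $\Phiz(z) \leq \Phiz(x) + \spr{\nabla \Phi^0(z), z-x}$.

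Combining these three inequalities (recall $\Thetaz = \Phiz + \Psi$) and collecting terms, I expect all inner products involving $\nabla \Phi^0(z)$ to cancel upon substituting $z - x^+ = \omega G$, leaving
\[
\Thetaz(x^+) \leq \Thetaz(x) + \spr{G, x^+ - x} + \tfrac{L\omega^2}{2}\norm{G}^2 \,.
\]
Rewriting $\spr{G, x^+ - x} = \spr{G, z - x} - \omega \norm{G}^2$ turns the residual quadratic term into $-\omega\kl{1 - \tfrac{L\omega}{2}}\norm{G}^2$. The assumption $0 < \omega < 1/L$ from Assumption~\ref{ass_main} forces $1 - \tfrac{L\omega}{2} > \tfrac{1}{2}$, which bounds this term by $-\tfrac{\omega}{2}\norm{G}^2$ and completes the proof.

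The main obstacle I anticipate is not the algebra --- the cancellation is clean once the subgradient relation is written correctly --- but the careful bookkeeping that every point at which \eqref{cond_convex} or \eqref{cond_L_grad} is invoked genuinely lies in $\Bmr(x_0)$, the only region where Assumption~\ref{ass_main}(4) is available. This is precisely why I first record $x^+ \in \Btr(x_0) \subset \Bmr(x_0)$ and reduce to $x \in \Btr(x_0)$, and why the convexity of the ball $\Bmr(x_0)$ must be used to legitimize the descent estimate along the segment $[z, x^+]$.
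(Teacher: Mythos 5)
Your proof is correct and follows essentially the same route as the paper's: the paper likewise combines the Lipschitz descent estimate, the first-order convexity inequality for $\Phiz$, and the proximal characterization (cited there as \cite[Proposition~12.26]{Bauschke_Combettes_2017}, which is exactly your subgradient relation $G - \nabla\Phiz(z) \in \partial\Psi(x^+)$), with the same cancellation of the $\nabla\Phiz(z)$ terms. The only cosmetic differences are that the paper absorbs $L \leq 1/\omega$ into the descent lemma from the start (writing $\tfrac{1}{2\omega}$ instead of carrying $\tfrac{L\omega^2}{2}$ to the end) and does not need your separate reduction to $x \in \Btr(x_0)$, since the proximal inequality holds trivially when $\Psi(x) = \infty$.
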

\begin{proof}
This lemma is also used in \cite{Attouch_Peypouquet_2016}. However, the sources for it cited there do not exactly cover our setting with $\Phid$ being defined on $\D(F) \subset \X$ only. Hence, we here give an elementary proof of the assertion. Note first that due to the Lipschitz continuity of $\Phiz$ in $\Bmr(x_0)$ and the fact that $\omega < 1/L$ we have
\begin{equation*}
\Phiz(u) \leq \Phiz(v) + \spr{\nabla \Phiz(v),u-v} + \frac{1}{2\omega} \norm{u-v}^2 \,,
\qquad
\forall \, u , v \in \Bmr(x_0) \,.
\end{equation*}
Now since $\Phiz$ is convex on $\Bmr(x_0)$, also have \cite{Bauschke_Combettes_2017}
\begin{equation*}
\Phiz(v) + \spr{\nabla \Phiz(v),w-v} \leq \Phiz(w) \,,
\qquad
\forall \, v , w \in \Bmr(x_0) \,,
\end{equation*}
and therefore, combining the above two inequalities, we get
\begin{equation*}
\Phiz(u) \leq \Phiz(w) + \spr{\nabla \Phiz(v),u-w} + \frac{1}{2\omega} \norm{u-v}^2 \,,
\qquad
\forall \, u ,v, w \in \Bmr(x_0) \,.
\end{equation*}
Using this result for $u = z-\omega \Go^0(z)$, $v = z$, $w = x$, noting that for $x,z \in \Bmr(x_0)$ there holds $u,v,w \in \Bmr(x_0)$, we get
\begin{equation}\label{ineq_Phiz_descent}
\Phiz(z-\omega \Go^0(z)) \leq \Phiz(x) + \spr{\nabla \Phiz(z),z-\omega \Go^0(z)-x} + \frac{\omega}{2} \norm{\Go^0(z)}^2 \,. 
\end{equation}
Next, note that since $z-\omega \Go^0(z) = \prox{z- \omega \nabla \Phiz(z)}$, a standard result from proximal operator theory \cite[Proposition~12.26]{Bauschke_Combettes_2017} implies that there holds
\begin{equation*}
\begin{split}
&\Psi(z-\omega \Go^0(z)) 
\leq
\Psi(x) 
+
\frac{1}{\omega}\spr{(z-\omega \Go^0(z) )-x,
(z- \omega \nabla \Phiz(z)) -(z-\omega \Go^0(z) )} 
\\
& \qquad =
\Psi(x) 
+
\spr{z-\omega \Go^0(z) -x,
- \nabla \Phiz(z)+ \Go^0(z) } 
\\
& \qquad =
\Psi(x) 
-
\spr{z-\omega \Go^0(z) -x,
\nabla \Phiz(z) }
+ 
\spr{z-x,\Goz(z)}
- \omega \norm{\Goz(z)}^2 
\,.
\end{split}
\end{equation*}
Adding this inequality to \eqref{ineq_Phiz_descent} and using the fact that by definition $\Thetaz = \Phiz + \Psi$ immediately yields the assertion.
\end{proof}

We want to derive a similar inequality also for the functionals $\Thetad$. The following lemma is of vital importance for doing that:
\begin{lemma}\label{lem_main_noise}
Let Assumption~\ref{ass_main} hold, let $x,z \in \Bmr(x_0)$ and define
\begin{equation}\label{def_Ri}
\begin{split}
R_1 &:= \Theta^0(z - \omega \God(z)) - \Theta^0(z-\omega \Go^0(z)) \,,
\\
R_2 &:=  \Thetad(z - \omega \God(z)) - \Theta^0(z-\omega \God(z))\,,
\\
R_3 &:= \Theta^0(x) - \Thetad(x)  \,,
\quad
R_4 := \spr{\Go^0(z) - \God(z),z-x} \,,
\\
R_5 &:= \frac{\omega}{2} \left(\norm{\God(z)}^2 - \norm{\Go^0(z)}^2 \right) \,,
\end{split}
\end{equation}
as well as 
\begin{equation}\label{def_R}
R := R_1 + R_2 + R_3 + R_4 + R_5 \,.
\end{equation}
Then there holds
\begin{equation*}
\Thetad(z-\omega \God(z)) \leq \Thetad(x)+ \spr{\God(z),z-x} - \frac{\omega}{2} \norm{\God(z)}^2 + R \,.
\end{equation*}
\end{lemma}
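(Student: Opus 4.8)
The plan is to reduce the assertion directly to Lemma~\ref{lem_main_zero} by expressing every noisy quantity in the desired inequality as the corresponding noise-free quantity plus one of the correction terms $R_1,\dots,R_5$. Since $x,z \in \Bmr(x_0)$ by hypothesis, Lemma~\ref{lem_main_zero} is applicable and furnishes
\begin{equation*}
\Theta^0(z-\omega \Go^0(z)) \leq \Theta^0(x)+ \spr{\Go^0(z),z-x} - \frac{\omega}{2} \norm{\Go^0(z)}^2 \,,
\end{equation*}
which will serve as the backbone of the argument. Before invoking it I would first check that all points at which $\Theta^0$ and $\Thetad$ are evaluated in the $R_i$ are admissible: by \eqref{prox_proj} both $z-\omega\God(z)=\prox{z-\omega\nabla\Phid(z)}$ and $z-\omega\Go^0(z)=\prox{z-\omega\nabla\Phiz(z)}$ are metric projections onto $\Btr(x_0)$, hence lie in $\Btr(x_0) \subset \Bmr(x_0) \subset \D(F)$, so that $\Phiz$, $\Phid$ and thus $\Theta^0$, $\Thetad$ may legitimately be evaluated there.

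Next I would rewrite the left-hand side of the claim by telescoping through the two intermediate objects $\Theta^0(z-\omega\Go^0(z))$ and $\Theta^0(z-\omega\God(z))$. By the definitions of $R_1$ and $R_2$ in \eqref{def_Ri} the two copies of $\Theta^0(z-\omega\God(z))$ cancel, leaving
\begin{equation*}
\Thetad(z-\omega\God(z)) = \Theta^0(z-\omega\Go^0(z)) + R_1 + R_2 \,,
\end{equation*}
so that combining this with the inequality above yields
\begin{equation*}
\Thetad(z-\omega\God(z)) \leq \Theta^0(x)+ \spr{\Go^0(z),z-x} - \frac{\omega}{2} \norm{\Go^0(z)}^2 + R_1 + R_2 \,.
\end{equation*}

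Finally I would convert the three remaining noise-free terms on the right-hand side into their noisy counterparts using the definitions of $R_3$, $R_4$, $R_5$: namely $\Theta^0(x) = \Thetad(x) + R_3$, then $\spr{\Go^0(z),z-x} = \spr{\God(z),z-x} + R_4$, and $-\tfrac{\omega}{2}\norm{\Go^0(z)}^2 = -\tfrac{\omega}{2}\norm{\God(z)}^2 + R_5$. Substituting these three identities and collecting the corrections into $R = R_1+R_2+R_3+R_4+R_5$ as in \eqref{def_R} reproduces exactly the asserted inequality. The argument is thus pure algebraic bookkeeping, with no genuine analytic obstacle — the substance has already been isolated in Lemma~\ref{lem_main_zero}. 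The only points meriting a little care are getting the sign conventions right in the telescoping decomposition of the left-hand side, so that $R_1$ and $R_2$ enter with the correct orientation, and the admissibility check above that places the projected points inside $\Bmr(x_0)$; the genuinely hard work, namely bounding the size of $R$ in terms of $\delta$, is postponed to later lemmas and does not enter this identity-level statement.
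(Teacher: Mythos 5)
Your proof is correct and follows essentially the same route as the paper: telescope $\Thetad(z-\omega\God(z)) = \Theta^0(z-\omega\Go^0(z)) + R_1 + R_2$, apply Lemma~\ref{lem_main_zero}, and then swap in the noisy quantities via $R_3$, $R_4$, $R_5$. The added admissibility check that the projected points lie in $\Btr(x_0) \subset \Bmr(x_0)$ is a sensible touch the paper leaves implicit, but it does not change the argument.
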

\begin{proof}
Using Lemma~\ref{lem_main_zero} we get
\begin{equation*}
\begin{split}
&\Thetad(z-\omega \God(z)) 
= 
\Thetaz(z-\omega \Goz(z)) + R_1 + R_2 
\\
& \qquad \leq 
\Theta^0(x)+ \spr{\Go^0(z),z-x} - \frac{\omega}{2} \norm{\Go^0(z)}^2 + R_1 + R_2
\\
& \qquad = 
\Thetad(x)+ \spr{\God(z),z-x} - \frac{\omega}{2} \norm{\God(z)}^2 + R_1 + R_2 + R_3 + R_4 + R_5 \,,
\end{split}
\end{equation*}
from which the statement of the theorem immediately follows.
\end{proof}

Next, we show that the $R_i$ and hence, also $R$, can be bounded in terms of $\delta + \delta^2$.
\begin{proposition}\label{prop_R_delta}
Let Assumption~\ref{ass_main} hold, let $x \in \Btr(x_0)$ and $z \in \Bmr(x_0)$ and let the $R_1,\dots,R_5$ be defined by \eqref{def_R}. Then there holds
\begin{equation*}
\begin{split}
R_1 & \leq \foh \ob^4 \omega^2  \delta^2 + 2 \ob^3 \omega \rho \delta  \,,
\\
R_2 &\leq \tfrac{3}{2} \delta^2 + 2 \rho \, \ob \, \delta \,,
\\
R_3 &\leq \tfrac{3}{2} \delta^2 + 2 \rho \, \ob \, \delta \,,
\\
R_4 &\leq 8 \rho \, \ob \delta\,,
\\
R_5 & \leq 
\frac{\omega}{2}\ob^2 \, \delta^2 + 8 \rho \, \ob \, \delta \,. 
\end{split}
\end{equation*}
\end{proposition}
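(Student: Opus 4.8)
The plan is to bound each $R_i$ separately, building everything on two elementary noise-propagation estimates. Since $\nabla\Phid(x)=F'(x)^*(F(x)-\yd)$ and $\nabla\Phiz(x)=F'(x)^*(F(x)-y)$, their difference collapses to $\nabla\Phiz(x)-\nabla\Phid(x)=F'(x)^*(\yd-y)$, so by the bound \eqref{cond_bounded} and $\norm{y-\yd}\le\delta$ we obtain $\norm{\nabla\Phiz(z)-\nabla\Phid(z)}\le\ob\delta$ for every $z\in\Bmr(x_0)$. Because $\prox{.}$ is the metric projection onto $\Btr(x_0)$ and hence nonexpansive (see \eqref{prox_proj}), feeding it the two arguments $z-\omega\nabla\Phid(z)$ and $z-\omega\nabla\Phiz(z)$ transfers this bound directly to the step operators, giving $\norm{\God(z)-\Goz(z)}\le\ob\delta$. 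The same projection property yields the auxiliary facts I will use repeatedly: both $z-\omega\God(z)$ and $z-\omega\Goz(z)$ lie in $\Btr(x_0)$; one has $\norm{\omega\Goz(z)}=\norm{z-\prox{z-\omega\nabla\Phiz(z)}}\le 8\rho$ (using $\norm{z-x_0}\le 6\rho$ and membership of the projection in $\Btr(x_0)$); and for any $v\in\Btr(x_0)$ the residual obeys $\norm{F(v)-y}=\norm{F(v)-F(\xs)}\le\ob\norm{v-\xs}$, which is a fixed multiple of $\rho\ob$ since $v,\xs\in\Bmr(x_0)$ and $\Bmr(x_0)$ is convex.

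Given these, $R_4$ and $R_5$ are immediate. For $R_4$ I would apply Cauchy--Schwarz and combine $\norm{\Goz(z)-\God(z)}\le\ob\delta$ with $\norm{z-x}\le\norm{z-x_0}+\norm{x_0-x}\le 8\rho$. For $R_5$ I would use the identity $\norm{a}^2-\norm{b}^2=\norm{a-b}^2+2\spr{a-b,b}$ with $a=\God(z)$, $b=\Goz(z)$, bounding the first term by $\tfrac{\omega}{2}\ob^2\delta^2$ and the cross term by $\omega\norm{\God(z)-\Goz(z)}\,\norm{\Goz(z)}\le 8\rho\ob\delta$ via $\norm{\omega\Goz(z)}\le 8\rho$.

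The terms $R_2$ and $R_3$ exploit that the point at which the two functionals are compared lies in $\Btr(x_0)$, so that $\Psi$ cancels and only a difference of squared residuals survives. Concretely, for $x\in\Btr(x_0)$ we have $R_3=\Phiz(x)-\Phid(x)=\foh\norm{F(x)-y}^2-\foh\norm{F(x)-\yd}^2$; writing $F(x)-y=(F(x)-\yd)+(\yd-y)$ expands this as $\spr{F(x)-\yd,\yd-y}+\foh\norm{\yd-y}^2$, and then $\norm{\yd-y}\le\delta$, $\norm{F(x)-\yd}\le\norm{F(x)-y}+\delta$ and the residual bound above produce the stated $\delta^2$ and $\rho\ob\delta$ contributions. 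The estimate for $R_2$ is the same computation with $x$ replaced by $w:=z-\omega\God(z)\in\Btr(x_0)$.

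Finally, $R_1$ is the most involved and is the step I expect to require the most care, since it compares $\Thetaz$ at the two distinct, $z$-dependent points $a:=z-\omega\God(z)$ and $b:=z-\omega\Goz(z)$. Here again both points lie in $\Btr(x_0)$, so $\Psi$ cancels and $R_1=\Phiz(a)-\Phiz(b)$. Writing this squared-residual difference as $\spr{F(b)-y,F(a)-F(b)}+\foh\norm{F(a)-F(b)}^2$ and invoking the Lipschitz continuity of $F$ through $\norm{F(a)-F(b)}\le\ob\norm{a-b}=\omega\ob\norm{\God(z)-\Goz(z)}\le\omega\ob^2\delta$, together with the residual bound for $\norm{F(b)-y}$ recorded above, yields precisely the $\rho\omega\ob^3\delta$ and $\omega^2\ob^4\delta^2$ terms. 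The conceptual crux throughout is that every remainder is ultimately governed by the single estimate $\norm{\God(z)-\Goz(z)}\le\ob\delta$ and by the projection character of $\prox{.}$, which simultaneously supplies the Lipschitz constant $1$ and keeps all relevant iterates inside $\Btr(x_0)$ so that $\Psi$ never contributes; $R_1$ is the only place where this must be coupled with the Lipschitz continuity of $F$ itself, which is what generates the quadratic-in-$\delta$ and highest-order-in-$\ob$ contributions.
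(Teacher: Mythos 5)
Your proposal is correct and follows essentially the same route as the paper's own proof: the same key noise-propagation estimate $\norm{\God(z)-\Goz(z)}\le\ob\,\delta$ obtained from $\nabla\Phiz(z)-\nabla\Phid(z)=F'(z)^*(\yd-y)$ and the nonexpansiveness of $\prox{.}$, the same squared-residual expansions for $R_1$, $R_2$, $R_3$ (with $\Psi$ cancelling because all evaluation points are projections onto $\Btr(x_0)$), and the same bounds $\norm{z-x}\le 8\rho$ and $\omega\norm{\Goz(z)}\le 8\rho$ for $R_4$ and $R_5$. The only imprecision is that you leave the residual bound as ``a fixed multiple of $\rho\,\ob$'' where the stated constants require $\norm{F(v)-y}\le 2\rho\,\ob$ for $v\in\Btr(x_0)$; a careful triangle inequality (with $v\in\Btr(x_0)$ and $\xs\in\Br(x_0)$) only yields $3\rho\,\ob$, but this is exactly the same slack present in the paper's own proof at this step, so your argument matches the paper's level of rigor.
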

\begin{proof}
The following somewhat long but elementary proof uses mainly the boundedness and Lipschitz continuity assumptions made above. For the following, let $x \in \Btr(x_0)$ and $z \in \Bmr(x_0)$. We treat each of the $R_i$ terms separately, starting with 
\begin{equation*}
\begin{split}
R_1 &= \Theta^0(z - \omega \God(z)) - \Theta^0(z-\omega \Go^0(z))
\\
& =
\foh \norm{F(z - \omega \God(z)) - y}^2 - \foh \norm{F(z - \omega \Goz(z))-y}^2
\\
& =
\foh \norm{F(z - \omega \God(z)) - F(z - \omega \Goz(z))}^2 \\
& \qquad - \spr{F(z - \omega \God(z)) - F(z - \omega \Goz(z)),F(z - \omega \Goz(z))-y}
\\
& \leq
\foh \norm{F(z - \omega \God(z)) - F(z - \omega \Goz(z))}^2 \\
& \qquad + \norm{F(z - \omega \God(z)) - F(z - \omega \Goz(z))}\norm{F(z - \omega \Goz(z))-y} \,.
\end{split}
\end{equation*}
Since we have
\begin{equation*}
\begin{split}
&\norm{F(z - \omega \God(z)) - F(z - \omega \Goz(z))} \leq \ob \norm{\omega \God(z) - \omega \Goz(z)}
\\
& \qquad
\leq \ob \norm{\prox{z-\omega\nabla \Phid(z) } - \prox{z-\omega\nabla \Phi(z) }}
\\
& \qquad
\leq \ob \, \omega \norm{\nabla \Phid(z)  - \nabla \Phi(z) } 
\\
& \qquad
= \ob \, \omega \norm{F'(z)^*(y-\yd) } \leq \ob^2 \, \omega \norm{y-\yd} \leq \ob^2 \, \omega \, \delta \,,
\end{split}
\end{equation*}
and
\begin{equation*}
\norm{F(z - \omega \Goz(z))-y} \leq \ob  \norm{\prox{z-\omega\nabla \Phid(z) } - \xs } \leq 2 \rho \, \ob \,,
\end{equation*}
there holds
\begin{equation*}
R_1 \leq \foh (\ob^2 \, \omega \, \delta)^2 + (\ob^2 \, \omega \, \delta) 2 \rho \, \ob  = \left(\foh \ob^4 \omega^2 \right) \delta^2 + \left(2 \ob^3 \omega \rho\right) \delta \,.
\end{equation*}
Next, we look at
\begin{equation*}
\begin{split}
R_2 &=  \Thetad(z - \omega \God(z)) - \Theta^0(z-\omega \God(z))
\\
& = \foh \norm{y - \yd}^2 + \spr{F(z - \omega \God(z))-\yd, y- \yd}
\\
& = \tfrac{3}{2} \norm{y - \yd}^2 + \spr{F(z - \omega \God(z))-y, y- \yd}
\\
& \leq \tfrac{3}{2} \delta^2 + \norm{F(z - \omega \God(z))-F(\xs)} \delta
\\
& \leq \tfrac{3}{2} \delta^2 + 2 \rho \, \ob \, \delta \,.
\end{split}
\end{equation*}
Similarly to above, for the next term we get
\begin{equation*}
\begin{split}
R_3 &= \Theta^0(x) - \Thetad(x)  = \foh \norm{F(x)-y}^2 - \foh \norm{F(x)-\yd}^2
\\
& = \foh \norm{y - \yd}^2 + \spr{F(x)-\yd, y- \yd}
\\
& = \tfrac{3}{2} \norm{y - \yd}^2 + \spr{F(x)-y, y- \yd}
\\
& \leq \tfrac{3}{2} \delta^2 + \norm{F(x)-F(\xs)} \delta
\\
& \leq \tfrac{3}{2} \delta^2 + 2 \rho \, \ob \, \delta \,.
\end{split}
\end{equation*}
Furthermore, together with the Lipschitz continuity of $\prox{.}$, we get
\begin{equation*}
\begin{split}
R_4 &= \spr{\Go^0(z) - \God(z),z-x} 
\\
& =
\frac{1}{\omega}\spr{\prox{z-\omega \nabla \Phid(z)}-\prox{z-\omega \nabla \Phi^0(z)},z-x}
\\
& \leq
\frac{1}{\omega}\norm{\prox{z-\omega \nabla \Phid(z)}-\prox{z-\omega \nabla \Phi^0(z)}} \norm{z-x}
\\
& \leq
\norm{ \nabla \Phid(z)-  \nabla \Phi^0(z)} \norm{z-x}
\leq
8 \rho \norm{ F'(z)(y-\yd)} \leq 8 \rho \, \ob \delta \,.
\end{split}
\end{equation*}
Finally, for the last term, we get
\begin{equation*}
\begin{split}
R_5 &= \frac{\omega}{2} \left(\norm{\God(z)}^2 - \norm{\Go^0(z)}^2 \right) 
\\
& = 
\frac{\omega}{2} \norm{\God(z)-\Goz(z)}^2 + \omega \spr{\God(z)-\Goz(z) , \Goz(z)}
\\
& \leq 
\frac{\omega}{2} \norm{\God(z)-\Goz(z)}^2 + \omega \norm{\God(z)-\Goz(z)} \norm{\Goz(z)}
\\
& \leq 
\frac{\omega}{2}\ob^2 \, \delta^2 + \omega \, \ob \, \delta \norm{\Goz(z)} 
\leq 
\frac{\omega}{2}\ob^2 \, \delta^2 + 8 \rho \, \ob \, \delta \,,
\end{split}
\end{equation*}
which concludes the proof.
\end{proof}

As an immediate consequence, we get the following
\begin{corollary}\label{cor_main_noise}
Let Assumption~\ref{ass_main} hold and let $x,z \in \Bmr(x_0)$. If we define
\begin{equation}\label{def_c1_c2}
\begin{split}
c_1 &= 2 \, \ob^3 \,\omega\, \rho +  20 \rho \, \ob \,,
\\
c_2 &= 3 + \foh \ob^4 \omega^2 + \foh \omega\ob^2 \,,
\end{split}
\end{equation}
then there holds
\begin{equation*}
\Thetad(z-\omega \God(z)) \leq \Thetad(x)+ \spr{\God(z),z-x} - \frac{\omega}{2} \norm{\God(z)}^2 + c_1 \delta + c_2 \delta^2\,.
\end{equation*}
\end{corollary}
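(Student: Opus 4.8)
The plan is to obtain the assertion directly by chaining the two preceding results. Lemma~\ref{lem_main_noise} already delivers the desired descent inequality up to the aggregate remainder $R = R_1 + R_2 + R_3 + R_4 + R_5$, and Proposition~\ref{prop_R_delta} bounds that remainder by an affine function of $\delta$ and $\delta^2$. No new estimate is required; the work is purely in assembling the pieces and collecting constants, which is exactly why this is stated as a corollary.

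First I would apply Lemma~\ref{lem_main_noise} to the given $x,z \in \Bmr(x_0)$, which yields
\[
\Thetad(z-\omega \God(z)) \leq \Thetad(x)+ \spr{\God(z),z-x} - \frac{\omega}{2} \norm{\God(z)}^2 + R \,,
\]
with $R$ as in \eqref{def_R}. It then remains only to show $R \leq c_1 \delta + c_2 \delta^2$. To that end I would substitute the five individual bounds of Proposition~\ref{prop_R_delta} into $R = R_1 + R_2 + R_3 + R_4 + R_5$ and group the terms by their power of $\delta$. Collecting the coefficients of $\delta$ gives $2\ob^3\omega\rho + (2+2+8+8)\rho\,\ob = 2\ob^3\omega\rho + 20\rho\,\ob = c_1$, while collecting the coefficients of $\delta^2$ gives $\foh\ob^4\omega^2 + \tfrac{3}{2} + \tfrac{3}{2} + \foh\omega\ob^2 = 3 + \foh\ob^4\omega^2 + \foh\omega\ob^2 = c_2$, in agreement with \eqref{def_c1_c2}. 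Inserting this bound on $R$ into the displayed inequality completes the argument.

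The only point requiring genuine care—and what passes for the main obstacle in an otherwise mechanical proof—is reconciling the domain hypotheses of the two cited results. Proposition~\ref{prop_R_delta} is stated for $x \in \Btr(x_0)$ and $z \in \Bmr(x_0)$, whereas the corollary assumes $x,z \in \Bmr(x_0)$. To close this gap cleanly I would either restrict attention to $x \in \Btr(x_0)$, which is all that is actually used when the corollary is later invoked (there the relevant point is $x = \xs \in \Br(x_0) \subset \Btr(x_0)$), or else verify that the bounds for $R_2$, $R_3$, $R_4$ in Proposition~\ref{prop_R_delta}—which enter through quantities such as $\norm{F(x)-F(\xs)}$ and rely on $x$ lying in $\Btr(x_0)$—persist on the larger ball $\Bmr(x_0)$ at the cost of enlarging the constant $c_1$. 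Beyond this bookkeeping the statement follows immediately from Lemma~\ref{lem_main_noise} and Proposition~\ref{prop_R_delta}.
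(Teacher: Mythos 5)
Your proof is correct and takes exactly the paper's route: the corollary is obtained by inserting the five bounds of Proposition~\ref{prop_R_delta} into the inequality of Lemma~\ref{lem_main_noise} and collecting the coefficients of $\delta$ and $\delta^2$, which the paper dispatches with a one-line ``immediately follows.'' Your remark on the hypothesis mismatch ($x \in \Bmr(x_0)$ in the corollary versus $x \in \Btr(x_0)$ in Proposition~\ref{prop_R_delta}) flags a genuine, if harmless, imprecision that the paper's proof silently glosses over, and your fix---restricting to $x \in \Btr(x_0)$, which is all that is used later with $x = \xkd$ and $x = \xs$---is the right one.
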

\begin{proof}
This immediately follows from Lemma~\ref{lem_main_zero} and Proposition~\ref{prop_R_delta}.
\end{proof}

Combining the above, we are now able to arrive at the following important result:
\begin{proposition}\label{prop_descent}
Let Assumption~\ref{ass_main} hold, let the sequence of iterates $\xkd$ and $\zkd$ be given by \eqref{Nesterov} and let $c_1$ and $c_2$ be defined by \eqref{def_c1_c2}. If we define
\begin{equation}\label{def_Dd}
\Dd := c_1 \delta + c_2 \delta^2 \,,
\end{equation}
then there holds
\begin{align}
&\Thetad(\zkd - \omega \God(\zkd)) \leq \Thetad(\xkd) + \spr{\God(\zkd),\zkd-\xkd} - \frac{\omega}{2} \norm{\God(\zkd)}^2 + \Dd \label{ineq_main_1} \,,
\\
&\Thetad(\zkd - \omega \God(\zkd)) \leq \Thetad(\xs) + \spr{\God(\zkd),\zkd-\xs} - \frac{\omega}{2} \norm{\God(\zkd)}^2 + \Dd \label{ineq_main_2} \,.
\end{align}
\end{proposition}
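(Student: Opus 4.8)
The plan is to obtain both inequalities as direct specializations of Corollary~\ref{cor_main_noise}, which already provides the desired descent-type estimate
\[
\Thetad(z-\omega \God(z)) \leq \Thetad(x)+ \spr{\God(z),z-x} - \frac{\omega}{2} \norm{\God(z)}^2 + c_1 \delta + c_2 \delta^2
\]
for any admissible pair $x,z$. Since $\Dd$ is defined precisely as $c_1\delta + c_2\delta^2$, the only work left is to check that the relevant points lie in the regions where the corollary applies, and then to substitute the appropriate choices of $x$ and $z$.

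First I would fix the variable $z=\zkd$. By Lemma~\ref{lem_within}, the Nesterov iterates satisfy $\zkd \in \Bmr(x_0)$ for all $k\in\N$, so the $z$-argument is always admissible. For inequality~\eqref{ineq_main_1} I would then set $x=\xkd$; again by Lemma~\ref{lem_within} we have $\xkd\in\Btr(x_0)\subset\Bmr(x_0)$, so Corollary~\ref{cor_main_noise} applies and yields \eqref{ineq_main_1} verbatim after recalling the definition of $\Dd$.

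For inequality~\eqref{ineq_main_2} I would instead set $x=\xs$. Here I would invoke Assumption~\ref{ass_main}~(2), which guarantees $\xs\in\Br(x_0)\subset\Btr(x_0)\subset\Bmr(x_0)$, so that $\xs$ is an admissible $x$-argument. Substituting $x=\xs$ and $z=\zkd$ into Corollary~\ref{cor_main_noise} then gives \eqref{ineq_main_2}.

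I do not anticipate a genuine obstacle in this proposition: the heavy lifting—establishing the descent inequality with the explicit noise remainder $R=\sum R_i$ and bounding it by $c_1\delta+c_2\delta^2$—was already carried out in Lemma~\ref{lem_main_noise}, Proposition~\ref{prop_R_delta}, and Corollary~\ref{cor_main_noise}. The only point requiring minor care is the bookkeeping of which ball each argument lives in, namely confirming that $\xkd$, $\zkd$, and $\xs$ all lie in $\Bmr(x_0)$ (with $\xkd,\xs$ even in $\Btr(x_0)$), which is exactly what Lemma~\ref{lem_within} and Assumption~\ref{ass_main} provide.
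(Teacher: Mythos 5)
Your proof is correct and is exactly the paper's argument: the paper's own proof states that the proposition ``immediately follows from Lemma~\ref{lem_within} and Corollary~\ref{cor_main_noise},'' and your proposal simply spells out the same specializations $z=\zkd$, $x=\xkd$ (resp.\ $x=\xs$) together with the membership checks in $\Btr(x_0)$ and $\Bmr(x_0)$. No gap; your explicit verification that $\xkd$ and $\xs$ lie in $\Btr(x_0)$ (not merely $\Bmr(x_0)$) is in fact the careful reading needed to match the hypotheses of Proposition~\ref{prop_R_delta}.
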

\begin{proof}
This immediately follows from Lemma~\ref{lem_within} and Corollary~\ref{cor_main_noise}.
\end{proof}

Using the above proposition, we are now able to derive the important
\begin{theorem}
Let Assumption~\ref{ass_main} hold and let the sequence of iterates $\xkd$ and $\zkd$ be given by \eqref{Nesterov} and let $\Delta(\delta)$ be defined by \eqref{def_Dd}. Then there holds
\begin{equation}\label{ineq_main_Ed}
\Ed(k+1) + \frac{2 \omega}{\alpha-1} \left( k(\alpha-3) \left( \Thetad(\xkd) -  \Thetad(\xs) \right) - (k+\alpha-1)^2 \Dd \right)
\leq \Ed(k)  \,.
\end{equation}
\end{theorem}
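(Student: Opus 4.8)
The plan is to treat $\Ed(k+1)-\Ed(k)$ head-on, splitting it into the ``potential'' part coming from the $\Thetad$-terms in \eqref{def_E} and the ``kinetic'' part coming from the $\norm{\wkd-\xs}^2$-terms, and then to show that after inserting the two descent estimates of Proposition~\ref{prop_descent} almost everything cancels. Writing $m:=k+\alpha-1$ and abbreviating $g:=\God(\zkd)$, the potential part equals $\tfrac{2\omega}{\alpha-1}\kl{m^2(\Thetad(\xkpd)-\Thetad(\xs))-(m-1)^2(\Thetad(\xkd)-\Thetad(\xs))}$, since the $\Thetad$-coefficient at index $k+1$ is $(k+1+\alpha-2)^2=m^2$ and at index $k$ is $(k+\alpha-2)^2=(m-1)^2$.

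For the kinetic part I would use the recursion \eqref{eq_wk}, i.e. $\wkpd=\wkd-\tfrac{\omega}{\alpha-1}m\,g$, to expand
\[
(\alpha-1)\kl{\norm{\wkpd-\xs}^2-\norm{\wkd-\xs}^2}=-2\omega m\spr{g,\wkd-\xs}+\tfrac{\omega^2}{\alpha-1}m^2\norm{g}^2 \,.
\]
The key algebraic step is then to resolve $\wkd-\xs$ through \eqref{def_w}: writing $\xs=\tfrac{m}{\alpha-1}\xs-\tfrac{k}{\alpha-1}\xs$ gives $\wkd-\xs=\tfrac{m}{\alpha-1}(\zkd-\xs)-\tfrac{k}{\alpha-1}(\xkd-\xs)$, whence, splitting $\xkd-\xs=(\zkd-\xs)-(\zkd-\xkd)$ and using $m-k=\alpha-1$, one obtains $\spr{g,\wkd-\xs}=\spr{g,\zkd-\xs}+\tfrac{k}{\alpha-1}\spr{g,\zkd-\xkd}$.

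Since $\xkpd=\zkd-\omega g$, the inequalities \eqref{ineq_main_2} and \eqref{ineq_main_1} rearrange into the lower bounds $\spr{g,\zkd-\xs}\ge(\Thetad(\xkpd)-\Thetad(\xs))+\tfrac{\omega}{2}\norm{g}^2-\Dd$ and $\spr{g,\zkd-\xkd}\ge(\Thetad(\xkpd)-\Thetad(\xkd))+\tfrac{\omega}{2}\norm{g}^2-\Dd$. Because the coefficients $-2\omega m$ and $-\tfrac{2\omega mk}{\alpha-1}$ multiplying these two inner products are negative, substituting the lower bounds yields an \emph{upper} bound for the kinetic part. Adding back the potential part and collecting, I expect three clean cancellations: the $\norm{g}^2$-terms vanish (total coefficient $\tfrac{\omega^2 m}{\alpha-1}(m-k-(\alpha-1))=0$); the $\Thetad(\xkpd)-\Thetad(\xs)$-terms vanish (total coefficient $\tfrac{2\omega m}{\alpha-1}(m-(\alpha-1)-k)=0$); and the $\Dd$-terms assemble into exactly $\tfrac{2\omega}{\alpha-1}m^2\Dd$. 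What survives is the $\Thetad(\xkd)-\Thetad(\xs)$-term, whose coefficient works out to $\tfrac{2\omega}{\alpha-1}\kl{mk-(m-1)^2}=-\tfrac{2\omega}{\alpha-1}\kl{k(\alpha-3)+(\alpha-2)^2}$.

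This already establishes a slightly stronger inequality than claimed; the stated estimate \eqref{ineq_main_Ed} then follows by discarding the non-negative surplus $\tfrac{2\omega}{\alpha-1}(\alpha-2)^2(\Thetad(\xkd)-\Thetad(\xs))$, which is legitimate whenever $\Thetad(\xkd)\ge\Thetad(\xs)$, i.e. $\norm{F(\xkd)-\yd}\ge\norm{y-\yd}$ (automatic for $\delta=0$, and valid while the discrepancy principle has not yet terminated the iteration). The argument is in essence bookkeeping, so the main obstacle I anticipate is purely organizational: tracking the coefficients of $\norm{g}^2$, of $\Thetad(\xkpd)-\Thetad(\xs)$, of $\Thetad(\xkpd)-\Thetad(\xkd)$, of $\Thetad(\xkd)-\Thetad(\xs)$ and of $\Dd$ carefully enough that the three intended cancellations genuinely occur, together with inserting the two descent inequalities in the correct (lower-bound) direction forced by the negative sign of their coefficients.
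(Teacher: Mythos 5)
Your proof is correct and is essentially the paper's own argument reorganized as a direct computation of $\Ed(k+1)-\Ed(k)$: the paper combines \eqref{ineq_main_1} and \eqref{ineq_main_2} with weights $\tfrac{k}{k+\alpha-1}$ and $\tfrac{\alpha-1}{k+\alpha-1}$ (proportional to your coefficients $\tfrac{2\omega mk}{\alpha-1}$ and $2\omega m$), uses the same identity $\wkd-\xs=(\zkd-\xs)+\tfrac{k}{\alpha-1}(\zkd-\xkd)$ together with the expansion of \eqref{eq_wk}, and drops the same surplus $\tfrac{2\omega}{\alpha-1}(\alpha-2)^2\kl{\Thetad(\xkd)-\Thetad(\xs)}$ via the scalar estimate $k(k+\alpha-1)\le(k+\alpha-2)^2-k(\alpha-3)$ (the paper's printed identity contains a typo, $(k+\alpha-1)^2$ in place of $(k+\alpha-2)^2$, and your coefficient $(\alpha-2)^2$ is the correct one). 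The sign condition you flag for the discard step, $\Thetad(\xkd)\ge\Thetad(\xs)$, is equally and silently required by the paper's proof when it multiplies that scalar inequality by $\Thetad(\xkd)-\Thetad(\xs)$, and it holds precisely in the regime where \eqref{ineq_main_Ed} is subsequently applied ($\delta=0$, or $k<\ks$ under the stopping rule \eqref{stoprule}), so your proof is on exactly the same footing as the paper's.
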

\begin{proof}
This proof is adapted from the corresponding result in \cite{Attouch_Peypouquet_2016}, the difference being the term $\Dd$. We start by multiplying inequality \eqref{ineq_main_1} by $\frac{k}{k+\alpha-1}$ and inequality \eqref{ineq_main_2} by $\frac{\alpha-1}{k+\alpha-1}$. Adding the results and using the fact that $\xkpd = \zkd - \omega \God(\zkd)$, we get
\begin{equation*}
\begin{split}
\Thetad(\xkpd) \leq & \frac{k}{k+\alpha-1} \Thetad(\xkd) + \frac{\alpha-1}{k+\alpha-1} \Thetad(\xs) - \frac{\omega}{2} \norm{\God(\zkd)}^2 + \Dd
\\
& + \spr{\God(\zkd), \frac{k}{k+\alpha-1}  (\zkd - \xkd) + \frac{\alpha-1}{k+\alpha-1} (\zkd - \xs)} \,.
\end{split}
\end{equation*} 
Since
\begin{equation*}
\frac{k}{k+\alpha-1}  (\zkd - \xkd) + \frac{\alpha-1}{k+\alpha-1} (\zkd - \xs) = \frac{\alpha-1}{k+\alpha-1} (\wkd - \xs) \,,
\end{equation*}
we obtain
\begin{equation}\label{ineq_helper_1}
\begin{split}
\Thetad(\xkpd) \leq & \frac{k}{k+\alpha-1} \Thetad(\xkd) + \frac{\alpha-1}{k+\alpha-1} \Thetad(\xs) - \frac{\omega}{2} \norm{\God(\zkd)}^2 + \Dd
\\
& \frac{\alpha-1}{k+\alpha-1} \spr{\God(\zkd), \wkd - \xs} \,.
\end{split}
\end{equation} 
Next, observe that it follows from \eqref{eq_wk} that
\begin{equation*}
\wkpd - \xs = \wkd - \xs - \frac{\omega}{\alpha - 1}(k+\alpha -1) \God(\zkd) \,.
\end{equation*}
After developing
\begin{equation*}
\begin{split}
\norm{\wkpd - \xs}^2 &= \norm{\wkd - \xs}^2 - 2  \frac{\omega}{\alpha - 1}(k+\alpha -1) \spr{\wkd - \xs, \God(\zkd)} 
\\
&+ \frac{\omega^2}{(\alpha - 1)^2}(k+\alpha -1)^2 \norm{\God(\zkd)}^2 \,, 
\end{split}
\end{equation*}
and multiplying the above expression by $ \frac{(\alpha - 1)^2}{2\omega (k+\alpha -1)^2}$, we get
\begin{equation*}
\begin{split}
& \frac{(\alpha - 1)^2}{2\omega (k+\alpha -1)^2} 
 \left( \norm{\wkd - \xs}^2  - \norm{\wkpd - \xs}^2\right) 
\\ &\qquad \qquad  =   \frac{\alpha - 1}{k+\alpha -1}\spr{ \God(\zkd), \wkd - \xs} 
- \frac{\omega}{2} \norm{\God(\zkd)}^2 \,.
\end{split}
\end{equation*}
Replacing this in inequality \eqref{ineq_helper_1} above, we get
\begin{equation*}
\begin{split}
\Thetad(\xkpd) \leq & \frac{k}{k+\alpha-1} \Thetad(\xkd) + \frac{\alpha-1}{k+\alpha-1} \Thetad(\xs) + \Dd
\\
& + \frac{(\alpha - 1)^2}{2\omega (k+\alpha -1)^2} 
\left( \norm{\wkd - \xs}^2  - \norm{\wkpd - \xs}^2\right)  \,.
\end{split}
\end{equation*}
Equivalently, we can write this as
\begin{equation*}
\begin{split}
\Thetad(\xkpd) - \Thetad(\xs) \leq & \frac{k}{k+\alpha-1} \left(\Thetad(\xkd)  -  \Thetad(\xs)  \right) + \Dd
\\
& + \frac{(\alpha - 1)^2}{2\omega (k+\alpha -1)^2} 
\left( \norm{\wkd - \xs}^2  - \norm{\wkpd - \xs}^2\right)  \,.
\end{split}
\end{equation*}
Multiplying by $\frac{2 \omega}{\alpha-1}(k+\alpha-1)^2$, we obtain
\begin{equation*}
\begin{split}
& \frac{2 \omega}{\alpha-1}(k+\alpha-1)^2 \left(\Thetad(\xkpd) - \Thetad(\xs) \right) \leq  \frac{2 \omega}{\alpha-1}k(k+\alpha-1) \left(\Thetad(\xkd)  -  \Thetad(\xs)  \right) 
\\
& \qquad + 
\frac{2 \omega}{\alpha-1}(k+\alpha-1)^2 \Dd + (\alpha - 1) \left( \norm{\wkd - \xs}^2  - \norm{\wkpd - \xs}^2\right)  \,,
\end{split}
\end{equation*}
and therefore, since there holds
\begin{equation*}
k(k+\alpha-1) = (k+\alpha-1)^2 - k(\alpha - 3) - (\alpha-2)^2 \leq (k+\alpha-1)^2 - k(\alpha-3) \,,
\end{equation*}
we get that
\begin{equation*}
\begin{split}
& \frac{2 \omega}{\alpha-1}(k+\alpha-1)^2 \left(\Thetad(\xkpd) - \Thetad(\xs) \right) \leq  -\frac{2 \omega}{\alpha-1}k(\alpha-3) \left(\Thetad(\xkd)  -  \Thetad(\xs)  \right) 
\\
& \qquad + \frac{2 \omega}{\alpha-1}(k+\alpha-1)^2 \left(\Thetad(\xkd) -\Thetad(\xs) \right) +  
\frac{2 \omega}{\alpha-1}(k+\alpha-1)^2 \Dd 
\\
& \qquad + (\alpha - 1) \left( \norm{\wkd - \xs}^2  - \norm{\wkpd - \xs}^2\right)  \,.
\end{split}
\end{equation*}
Together with the definition \eqref{def_E} of $\Ed$, this implies
\begin{equation*}
\Ed(k+1) + \frac{2 \omega}{\alpha-1}k(\alpha-3) \left(\Thetad(\xkd)  -  \Thetad(\xs)  \right) 
\leq \Ed(k) + \frac{2 \omega}{\alpha-1}(k+\alpha-1)^2 \Dd \,,
\end{equation*}
or equivalently, after rearranging, we get
\begin{equation*}
\Ed(k+1) + \frac{2 \omega}{\alpha-1} \left( k(\alpha-3) \left( \Thetad(\xkd) -  \Thetad(\xs) \right) - (k+\alpha-1)^2 \Dd \right)
\leq \Ed(k)  \,,
\end{equation*}
which concludes the proof.
\end{proof}

Inequality \eqref{ineq_main_Ed} is the key ingredient for showing that \eqref{Nesterov}, combined with a suitable stopping rule, gives rise to a convergent regularization method. In order to derive a suitable stopping rule, note first that in the case of exact data, i.e., $\delta = 0$, inequality \eqref{ineq_main_Ed} reduces to
\begin{equation}\label{ineq_main_E}
\Ez(k+1) + \frac{2 \omega}{\alpha-1}k(\alpha-3) \left(\Thetaz(\xkz)  -  \Thetaz(\xs)  \right) 
\leq \Ez(k)  \,.
\end{equation}
Since by Assumption~\ref{ass_main} the functional $\Phiz$ is convex, the arguments used in \cite{Attouch_Peypouquet_2016} are applicable, and we can deduce the following:
\begin{theorem}\label{thm_exact}
Let Assumption~\ref{ass_main} hold, let the sequence of iterates $\xkz$ and $\zkz$ be given by \eqref{Nesterov} with exact data $y = \yd$, i.e., $\delta=0$ and let $\S$ be defined by \eqref{def_S}. Then the following statements hold:
\begin{itemize}
\item The sequence $(\Ez(k))$ is non-increasing and $\lim\limits_{k\to \infty} \Ez(k)$ exists.
\item For each $k \geq 0$, there holds 
\begin{equation*}
\norm{F(\xkz)-y}^2  \leq \frac{(\alpha-1)\Ez(0)}{ \omega(k+\alpha-2)^2} \,, \qquad \norm{\wkz-\xs}^2 \leq \frac{\Ez(0)}{\alpha-1} \,.
\end{equation*}
\item There holds
\begin{equation*}
\sum\limits_{k=1}^{\infty} k \norm{F(\xkz)-y}^2 \leq \frac{(\alpha-1)\Ez(1)}{\omega (\alpha-3)}\,,
\end{equation*}
as well as
\begin{equation*}
\sum\limits_{k=1}^{\infty} k \norm{\xkpz - \xkz}^2 \leq \frac{(\alpha-1)\Ez(1)}{\omega (\alpha-3)} \,.
\end{equation*}
\item There holds
\begin{equation*}
\liminf_{k\to \infty}\left(k^2 \ln(k) \norm{F(\xkz)-y}^2  \right)  = 0 \,,
\end{equation*}
as well as 
\begin{equation*}
\liminf_{k\to \infty}\left(k \ln(k) \norm{\xkpz-\xkz}^2  \right)  = 0 \,.
\end{equation*}
\item There exists an $\xt$ in $\S$, such that the sequence $(\xkz)$ converges weakly to $\xt$, i.e.,
\begin{equation}\label{conv_exact}
\lim_{\delta \to 0} \spr{\xkz,h} = \spr{\xt,h}  \,, \qquad \forall \, h \in \X \,.
\end{equation}
\end{itemize}
\end{theorem}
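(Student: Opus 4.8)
The entire theorem rests on the single energy inequality \eqref{ineq_main_E}, so the plan is to extract each assertion from it. First I observe that $\xs$ is a global minimizer of $\Thetaz$ with $\Thetaz(\xs)=0$: indeed $F(\xs)=y$ and $\xs\in\Br(x_0)\subset\Btr(x_0)$ give $\Phiz(\xs)=\Psi(\xs)=0$, while by Lemma~\ref{lem_within} every iterate lies in $\Btr(x_0)$, so $\Thetaz(\xkz)=\foh\norm{F(\xkz)-y}^2\ge0=\Thetaz(\xs)$. Since $\alpha>3$, the subtracted term in \eqref{ineq_main_E} is nonnegative, hence $(\Ez(k))$ is non-increasing; as both summands in \eqref{def_E} are nonnegative, $\Ez(k)\ge0$, so this monotone bounded sequence converges, which is the first assertion. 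The two bounds of the second assertion follow by discarding one of the two nonnegative summands in \eqref{def_E} and using $\Ez(k)\le\Ez(0)$ together with $\Thetaz(\xkz)-\Thetaz(\xs)=\foh\norm{F(\xkz)-y}^2$. Summing \eqref{ineq_main_E} from $k=1$ to $N$ telescopes the energy and, after dropping $\Ez(N+1)\ge0$, yields $\frac{\omega(\alpha-3)}{\alpha-1}\sum_{k=1}^N k\norm{F(\xkz)-y}^2\le\Ez(1)$, which is the first summability estimate; the companion estimate for $\sum_k k\norm{\xkpz-\xkz}^2$ follows from the same inequality via the refined bookkeeping of Attouch and Peypouquet \cite{Attouch_Peypouquet_2016}.

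For the two $\liminf$ statements I would invoke the elementary lemma that $\sum_k b_k<\infty$ with $b_k\ge0$ forces $\liminf_k k\ln(k)\,b_k=0$, since otherwise $b_k\ge c/(k\ln k)$ eventually, contradicting the divergence of $\sum_k 1/(k\ln k)$. Applying it with $b_k=k\norm{F(\xkz)-y}^2$ gives $\liminf_k k^2\ln(k)\norm{F(\xkz)-y}^2=0$, and with $b_k=k\norm{\xkpz-\xkz}^2$ gives the (slightly stronger than claimed) $\liminf_k k^2\ln(k)\norm{\xkpz-\xkz}^2=0$, from which the stated result follows.

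The weak convergence in \eqref{conv_exact} is the heart of the proof, and I would establish it through Opial's lemma, which requires (i) that $\lim_k\norm{\xkz-\xs'}$ exists for every $\xs'\in\S$ and (ii) that every weak sequential cluster point of $(\xkz)$ lies in $\S$. Ingredient (ii) is the easier one: by the second assertion $\norm{F(\xkz)-y}\to0$, while $(\xkz)\subset\Btr(x_0)$ is bounded, so any weakly convergent subsequence $\xkz\rightharpoonup\bar x$ satisfies, by the weak sequential closedness of $F$ on $\Btr(x_0)$, $\bar x\in\Btr(x_0)$ and $F(\bar x)=y$, i.e. $\bar x\in\S$. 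For ingredient (i), set $h_k:=\foh\norm{\xkz-\xs'}^2$; since every point of $\S$ minimizes $\Thetaz$ with value $0$, Proposition~\ref{prop_descent} (at $\delta=0$, applied with $\xs'$ in place of $\xs$) combined with $\Thetaz(\xkpz)\ge0$ gives the nonexpansiveness $\norm{\xkpz-\xs'}\le\norm{\zkz-\xs'}$. Expanding $\zkz=\xkz+\lka(\xkz-x_{k-1}^0)$ by the three-point identity then produces
\begin{equation*}
h_{k+1}-h_k\le\lka\,(h_k-h_{k-1})+\mu_k,\qquad \mu_k:=\foh\,\lka(1+\lka)\norm{\xkz-x_{k-1}^0}^2,
\end{equation*}
and since $\lka(1+\lka)\le2$, the summability $\sum_k\norm{\xkz-x_{k-1}^0}^2<\infty$ makes $\sum_k\mu_k<\infty$.

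I expect the main obstacle to be precisely the passage from this inertial quasi-Fej\'er inequality to the existence of $\lim_k h_k$: because the momentum coefficient $\lka\to1$ is not bounded away from $1$, the elementary descent lemma valid for $0\le t_k\le t<1$ does not apply, and one must invoke the sharper discrete lemma of Attouch and Peypouquet \cite{Attouch_Peypouquet_2016} (which relies on $\alpha>3$) that still yields convergence of $(h_k)$ in this regime. Once $\lim_k\norm{\xkz-\xs'}$ is known to exist for all $\xs'\in\S$, Opial's lemma delivers a single $\xt\in\S$ with $\xkz\rightharpoonup\xt$, completing the proof.
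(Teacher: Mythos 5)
Your proposal is correct and takes essentially the same approach as the paper: the paper's entire proof is a one-line citation of Facts 1--4, Remark 2 and Theorem 3 in \cite{Attouch_Peypouquet_2016}, and your argument is precisely a reconstruction of those results in this setting --- energy decay from \eqref{ineq_main_E}, the resulting bounds and summability estimates, the elementary $\liminf$ lemma, and Opial's lemma combined with the inertial quasi-Fej\'er estimate. The only ingredients you defer to \cite{Attouch_Peypouquet_2016} (the bookkeeping for $\sum_k k \norm{\xkpz-\xkz}^2$ and the discrete lemma handling $\lka \to 1$) are exactly the parts the paper also takes from that reference, so no genuine gap remains.
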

\begin{proof}
The statements follow from Facts 1-4, Remark 2 and Theorem 3 in \cite{Attouch_Peypouquet_2016}.
\end{proof}

Thanks to Theorem~\ref{thm_exact}, we now know that Nesterov's accelerated gradient method \eqref{Nesterov} converges weakly to a solution $\xt$ from the solution set $\S$ in case of exact data $y = \yd$, i.e., $\delta = 0$. 

Hence, it remains to consider the behaviour of \eqref{Nesterov} in the case of inexact data $\yd$. As mentioned above, the key for doing so is inequality \eqref{ineq_main_Ed}. We want to use it to show that, similarly to the exact data case, the sequence $(\Ed(k))$ is non-increasing up to some $k \in \N$. To do this, note first that $\Ed(k)$ is positive as long as
\begin{equation*}
\Thetad(\xkd) \geq \Thetad(\xs) \,,
\end{equation*}
which is true, as long as
\begin{equation}\label{stop_idea_1}
\norm{F(\xkd) -\yd} \geq \delta \,.
\end{equation}
On the other hand, the term 
\begin{equation}\label{term_E_middle}
\frac{2 \omega}{\alpha-1} \left( k(\alpha-3) \left( \Thetad(\xkd) -  \Thetad(\xs) \right) - (k+\alpha-1)^2 \Dd \right)
\end{equation}
in \eqref{ineq_main_Ed} is positive, as long as
\begin{equation*}
\Thetad(\xkd) -  \Thetad(\xs) \geq \frac{(k+\alpha-1)^2}{k(\alpha-3)} \Dd \,, 
\end{equation*}
which is satisfied, as long as 
\begin{equation}\label{stop_idea_2}
\norm{F(\xkd) - \yd}^2 \geq \frac{2(k+\alpha-1)^2}{k(\alpha-3)} \Dd + \delta^2  \,,
\end{equation}
which obviously implies \eqref{stop_idea_1}. These considerations suggest, given a small $\tau > 1$, to choose the stopping index $\ks = \ks(\delta,\yd)$ as the smallest integer such that
\begin{equation}\label{stoprule}
\norm{F(\xd_{\ks}) - \yd}^2 \leq \frac{2(k+\alpha-1)^2}{k(\alpha-3)} \Dd +  \tau^2 \delta^2 < \norm{F(\xkd) - \yd}^2 \,, \qquad \ks > k \,.
\end{equation}
Concerning the well-definedness of $\ks$, we are able to prove the following
\begin{lemma}\label{lem_ks_well}
Let Assumption~\ref{ass_main} hold, let the sequence of iterates $\xkd$ and $\zkd$ be given by \eqref{Nesterov} and let $c_1$ and $c_2$ be defined by \eqref{def_c1_c2}. Then the stopping index $\ks$ defined by \eqref{stoprule} with $\tau > 1$ is well-defined and there holds
\begin{equation}\label{stoprule_speed}
\ks  = \LandauO(\delta^{-1}) \,,
\end{equation}
\end{lemma}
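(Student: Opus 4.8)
The plan is to exploit the quasi-monotonicity inequality \eqref{ineq_main_Ed} together with the definition of the stopping rule \eqref{stoprule}, which was tailored precisely so that the bracketed ``middle term'' in \eqref{ineq_main_Ed} is not merely nonnegative (the heuristic behind \eqref{stop_idea_2}) but in fact bounded below by a positive multiple of $k\delta^2$ as long as the iteration has not yet been stopped. Converting this into a quantitative descent for $\Ed$ and telescoping then forces the number of steps to be finite and of order $\delta^{-1}$.

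First I would fix any index $k$ with $1 \leq k < \ks$. By the rightmost strict inequality in \eqref{stoprule} we then have $\norm{F(\xkd)-\yd}^2 > \frac{2(k+\alpha-1)^2}{k(\alpha-3)}\Dd + \tau^2\delta^2$. Using Lemma~\ref{lem_within}, both $\xkd$ and $\xs$ lie in $\Btr(x_0)$, so $\Psi$ vanishes at both points and $\Thetad(\xkd)=\foh\norm{F(\xkd)-\yd}^2$, while $\Thetad(\xs)=\foh\norm{y-\yd}^2\leq\foh\delta^2$. Substituting the discrepancy bound gives
\[
\Thetad(\xkd)-\Thetad(\xs) > \frac{(k+\alpha-1)^2}{k(\alpha-3)}\Dd + \foh(\tau^2-1)\delta^2 \,,
\]
and inserting this into the middle term of \eqref{ineq_main_Ed} produces an \emph{exact} cancellation of the $\Dd$ contribution, leaving
\[
k(\alpha-3)\bigl(\Thetad(\xkd)-\Thetad(\xs)\bigr) - (k+\alpha-1)^2\Dd > \foh k(\alpha-3)(\tau^2-1)\delta^2 \,.
\]
Hence \eqref{ineq_main_Ed} upgrades to the strict descent $\Ed(k+1) < \Ed(k) - \frac{\omega(\alpha-3)(\tau^2-1)}{\alpha-1}\,k\,\delta^2$ for every $1\leq k<\ks$.

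Next I would telescope this estimate from $k=1$ to $k=N$ for an arbitrary $N$ with $N+1<\ks$. Since each such index is a pre-stopping index, the computation above also yields $\Thetad(x_{N+1}^\delta)\geq\Thetad(\xs)$, and therefore $\Ed(N+1)\geq 0$ by the definition \eqref{def_E}. Combined with $\sum_{k=1}^N k = N(N+1)/2$, this gives
\[
\frac{N(N+1)}{2} < \frac{(\alpha-1)\,\Ed(1)}{\omega(\alpha-3)(\tau^2-1)\,\delta^2} \,.
\]
It then remains to bound $\Ed(1)$ by a constant independent of $\delta$: from \eqref{def_w} one computes $w_1^\delta=\xd_1$, so $\norm{w_1^\delta-\xs}\leq 4\rho$, while $\Thetad(\xd_1)\leq\foh(4\rho\,\ob+\delta)^2$ follows from the boundedness \eqref{cond_bounded} together with $\xd_1,\xs\in\Btr(x_0)$, and $\Thetad(\xs)\geq 0$; hence $\Ed(1)\leq C_0$ for some $C_0$ independent of $\delta$ (for, say, $\delta\leq 1$). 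Consequently $N^2\lesssim\delta^{-2}$, which is impossible for arbitrarily large $N$; thus $\ks$ is finite, and choosing $N=\ks-2$ yields $\ks=\LandauO(\delta^{-1})$.

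The main obstacle is the exact algebraic cancellation in the second step: it is what converts the mere nonnegativity motivating \eqref{stop_idea_2} into a genuinely quantitative $k\delta^2$ descent, and getting the constants right there — so that the $k$-dependent threshold in \eqref{stoprule} makes the $\Dd$ terms cancel rather than merely be dominated — is the crux. The only other point requiring care is the $\delta$-independent bound on $\Ed(1)$, since otherwise the right-hand side above could itself blow up as $\delta\to 0$ and the $\LandauO(\delta^{-1})$ rate would be lost; this is handled by the boundedness of $F$ on $\Btr(x_0)$ and the identity $w_1^\delta=\xd_1$.
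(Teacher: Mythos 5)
Your proof is correct, but it takes a genuinely different route from the paper's own proof of this lemma. The paper never touches the energy inequality \eqref{ineq_main_Ed} here: it simply combines the uniform residual bound $\norm{F(\xkd)-\yd}^2 \leq (2\ob\rho+\delta)^2$ (valid because $\xkd \in \Btr(x_0)$ by Lemma~\ref{lem_within}) with the fact that the threshold in \eqref{stoprule} grows at least linearly in $k$, namely $\frac{(k+\alpha-1)^2}{k(\alpha-3)}\Dd \geq \frac{k}{2(\alpha-3)}\,c_1\delta$, so the rule must fire once $k\,c_1\delta$ exceeds a fixed constant; this gives $\ks \leq 2(\alpha-3)\bigl(\tfrac{2\ob^2\rho^2+2\ob\rho\delta}{c_1\delta+c_2\delta^2}\bigr)+1$. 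What you do instead — lower-bounding the middle term of \eqref{ineq_main_Ed} by $\foh k(\alpha-3)(\tau^2-1)\delta^2$ via the exact cancellation of the $\Dd$ contributions, telescoping, and bounding $\Ed(1)$ independently of $\delta$ — is precisely the argument the paper itself deploys \emph{later}, in Proposition~\ref{prop_conv_inex} and Corollary~\ref{cor_ks_bound} (your final display is Corollary~\ref{cor_ks_bound} with $N=\ks-2$), and there is no circularity since \eqref{ineq_main_Ed} is established before the lemma. The trade-offs are worth noting. The paper's argument is more elementary, but its constant blows up as $c_1 \to 0$ and the argument collapses entirely when $\Dd=0$; indeed, in Section 3 the paper explicitly remarks that well-definedness under Assumption~\ref{ass_main_2} (where the rule degenerates to the discrepancy principle \eqref{stoprule_2}) ``cannot be derived from Lemma~\ref{lem_ks_well}'' and must instead be obtained by the energy argument — i.e., by your route. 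Conversely, your constant $\frac{2(\alpha-1)\Ed(1)}{\omega(\alpha-3)(\tau^2-1)}$ is independent of $c_1,c_2$ but degenerates as $\tau \downarrow 1$, whereas the paper's proof uses $\tau>1$ only to discard the harmless term $(1-\tau^2)\delta^2$. Two cosmetic points: $\norm{w_1^\delta-\xs}=\norm{\xd_1-\xs}\leq 3\rho$ (your $4\rho$ is of course a valid upper bound), and your explicit verification that $\Ed(1)$ stays bounded as $\delta\to 0$ is a detail the paper leaves implicit (it invokes the analogous boundedness of $\Ed(0)$ only later, in the proof of Theorem~\ref{thm_main}).
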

\begin{proof}
By the definition \eqref{def_Dd} of $\Dd$ and due to
\begin{equation*}
\norm{F(\xkd)-\yd}^2 \leq \left(\norm{F(\xkd)-y} + \norm{y-\yd}\right)^2 \leq (2\ob \rho + \delta )^2 \,,
\end{equation*}
it follows from \eqref{stoprule} that for all $k < \ks$ there holds
\begin{equation*}
\frac{2(k+\alpha-1)^2}{k(\alpha-3)} (c_1 \delta + c_2 \delta^2)  +  \tau^2\delta^2 \leq (2\ob \rho + \delta )^2 \,,
\end{equation*}
which can be rewritten as
\begin{equation}\label{ineq_helper_2}
\begin{split}
\frac{(k+\alpha-1)^2}{k(\alpha-3)} (c_1 \delta + c_2 \delta^2) 
\leq 2\ob^2 \rho^2 + 2\ob \rho \delta + (1-\tau^2) \delta^2  
\leq 2\ob^2 \rho^2 + 2\ob \rho \delta\,,
\end{split}
\end{equation}
where we have used that $\tau > 1$. Since the left hand side  in the above inequality goes to $\infty$ for $k\to \infty$, while the right hand side stays bounded, it follows that $\ks$ is finite and hence well-defined for $\delta \neq 0$. Furthermore, since
\begin{equation*}
\frac{(k+\alpha-1)^2}{k(\alpha-3)} \geq  \frac{k}{2(\alpha-3)} \,,
\end{equation*}
which can see by multiplying the above inequality by $k(\alpha-3)$, and since \eqref{ineq_helper_2} also holds for $k=\ks-1$, we get
\begin{equation*}
\frac{\ks-1 }{2(\alpha-3)}  (c_1 \delta + c_2 \delta^2)   \leq 2\ob^2 \rho^2 + 2\ob \rho \delta \,.
\end{equation*}
Reordering the terms, we arrive at
\begin{equation*}
\ks    \leq 
2(\alpha-3) \left(\frac{2\ob^2 \rho^2 + 2\ob \rho \, \delta }{c_1 \delta + c_2 \delta^2} \right) + 1\,.
\end{equation*}
from which the assertion now immediately follows.
\end{proof}

The rate $\ks = \LandauO(\delta^{-1})$ given in \eqref{stoprule_speed} for the iteration method \eqref{Nesterov} should be compared with the corresponding result \cite[Corollary~2.3]{Kaltenbacher_Neubauer_Scherzer_2008} for Landweber iteration \eqref{Landweber}, where one only obtains $\ks = \LandauO(\delta^{-2})$. In order to obtain the rate $\ks = \LandauO(\delta^{-1})$ for Landweber iteration, apart from others, a source condition of the form
\begin{equation}\label{source_cond}
\xD - x_0 \in \Range(F'(\xD)^*) 
\end{equation}
has to hold, which is not required for Nesterov's accelerated gradient method \eqref{Nesterov}.

Before we turn to our main result, we first prove a couple of important consequences of \eqref{ineq_main_Ed} and the stopping rule \eqref{stoprule}.
\begin{proposition}\label{prop_conv_inex}
Let Assumption~\ref{ass_main} be satisfied, let $\xkd$ and $\zkd$ be defined by \eqref{Nesterov} and let $\Ed$ be defined by \eqref{def_E}. Assuming that the stopping index $\ks$ is determined by \eqref{stoprule} with some $\tau > 1$, then, for all $0 \leq k \leq \ks$, the sequence $(\Ed(k))$ is non-increasing and in particular, $\Ed(k) \leq \Ed(0)$. Furthermore, for all $0\leq k \leq \ks$ there holds
\begin{equation}\label{ineq_descent_inex}
\Thetad(\xkd) - \Thetad(\xs) \leq \frac{(\alpha-1) \Ed(0)}{2\omega(k+\alpha-2)^2}\,,
\end{equation} 
as well as 
\begin{equation}\label{ineq_wkd}
\norm{\wkd-\xs}^2 \leq \frac{\Ed(0)}{(\alpha-1)} \,,
\end{equation} 
and 
\begin{equation}\label{ineq_summability}
\sum\limits_{k=1}^{\ks-1} \left( k \left( \Thetad(\xkd) -  \Thetad(\xs) \right) - \frac{(k+\alpha-1)^2}{(\alpha-3)} \Dd \right)
\leq \frac{(\alpha-1)\Ed(1)}{2\omega(\alpha-3)}
 \,.
\end{equation} 
\end{proposition}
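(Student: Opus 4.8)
The engine of the whole proposition is the single energy inequality \eqref{ineq_main_Ed}, and the plan is to read off all four assertions from it by controlling the sign of the bracketed middle term. First I would make the stopping rule usable: by Lemma~\ref{lem_within} both $\xkd$ and $\xs$ lie in $\Btr(x_0)$, so $\Psi$ vanishes at each and $\Thetad(\xkd)-\Thetad(\xs)=\foh\norm{F(\xkd)-\yd}^2-\foh\norm{F(\xs)-\yd}^2$; since $F(\xs)=y$ together with \eqref{noisy_data} gives $\norm{F(\xs)-\yd}\le\delta$, this yields $\Thetad(\xkd)-\Thetad(\xs)\ge\foh\norm{F(\xkd)-\yd}^2-\foh\delta^2$. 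For every $1\le k\le\ks-1$ the strict part of the stopping rule \eqref{stoprule} reads $\norm{F(\xkd)-\yd}^2>\tfrac{2(k+\alpha-1)^2}{k(\alpha-3)}\Dd+\tau^2\delta^2$, and feeding this into the previous bound and multiplying by $k(\alpha-3)>0$ shows $k(\alpha-3)(\Thetad(\xkd)-\Thetad(\xs))-(k+\alpha-1)^2\Dd\ge\foh k(\alpha-3)(\tau^2-1)\delta^2\ge0$. Hence the middle term of \eqref{ineq_main_Ed} is nonnegative and $\Ed(k+1)\le\Ed(k)$ for $1\le k\le\ks-1$; in particular $\Thetad(\xkd)-\Thetad(\xs)\ge0$ on this range, a fact the later bounds reuse.

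Granting the bound $\Ed(k)\le\Ed(0)$ (established modulo the first step, see below), the two pointwise estimates then follow by discarding one nonnegative summand from the definition \eqref{def_E}. For \eqref{ineq_descent_inex} I would drop $(\alpha-1)\norm{\wkd-\xs}^2\ge0$, so that $\tfrac{2\omega}{\alpha-1}(k+\alpha-2)^2(\Thetad(\xkd)-\Thetad(\xs))\le\Ed(k)\le\Ed(0)$, and then divide by the positive factor; no sign information on the $\Thetad$-difference is needed here. For \eqref{ineq_wkd} I would instead drop the $\Thetad$-difference term, which is legitimate exactly because $\Thetad(\xkd)-\Thetad(\xs)\ge0$, leaving $(\alpha-1)\norm{\wkd-\xs}^2\le\Ed(k)\le\Ed(0)$.

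For the summability \eqref{ineq_summability} I would telescope. Rearranging \eqref{ineq_main_Ed} and pulling $(\alpha-3)$ out of the bracket gives $\tfrac{2\omega(\alpha-3)}{\alpha-1}\bigl(k(\Thetad(\xkd)-\Thetad(\xs))-\tfrac{(k+\alpha-1)^2}{\alpha-3}\Dd\bigr)\le\Ed(k)-\Ed(k+1)$, where by the first paragraph each left-hand summand is already nonnegative for $1\le k\le\ks-1$. Summing over $k=1,\dots,\ks-1$ collapses the right-hand side to $\Ed(1)-\Ed(\ks)$, and dividing by the positive constant yields the claimed bound $\tfrac{(\alpha-1)\Ed(1)}{2\omega(\alpha-3)}$ as soon as one may replace $\Ed(1)-\Ed(\ks)$ by $\Ed(1)$, i.e.\ as soon as $\Ed(\ks)\ge0$.

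I expect the genuine difficulty to lie entirely at the two endpoints, where the stopping rule is either absent or pointing the wrong way; the interior estimates above are routine. At $k=0$ the threshold in \eqref{stoprule} is undefined and the middle term of \eqref{ineq_main_Ed} equals $-2\omega(\alpha-1)\Dd<0$, so monotonicity of the very first step and the inequality $\Ed(1)\le\Ed(0)$ (on which $\Ed(k)\le\Ed(0)$ rests) cannot be read off \eqref{ineq_main_Ed}; instead one must use $z_0^\delta=x_0^\delta=x_0$ in \eqref{ineq_main_1}--\eqref{ineq_main_2} to compute $\Ed(1)$ directly, after which $\Ed(1)\le\Ed(0)$ follows provided the initial discrepancy $\norm{F(x_0)-\yd}$ exceeds a $\Dd$-dependent threshold --- a mild nondegeneracy of $x_0$ implicit in $\ks\ge1$ (the same condition makes \eqref{ineq_wkd} hold at $k=0$, where $w_0^\delta=x_0$). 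At $k=\ks$ the stopping rule only bounds $\norm{F(\xksd)-\yd}^2$ from above, so the sign of $\Thetad(\xksd)-\Thetad(\xs)$ is no longer forced to be nonnegative; securing $\Ed(\ks)\ge0$ (needed to finish \eqref{ineq_summability}) and extending \eqref{ineq_wkd} to $k=\ks$ is exactly the point that must be handled with care rather than inherited from the stopping rule.
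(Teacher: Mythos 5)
Your proposal is correct in its core and follows exactly the paper's route: the stopping rule \eqref{stoprule} forces the bracketed term \eqref{term_E_middle} in \eqref{ineq_main_Ed} to be nonnegative for $1\le k\le \ks-1$, hence $(\Ed(k))$ is non-increasing; the pointwise bounds \eqref{ineq_descent_inex} and \eqref{ineq_wkd} follow by discarding one of the two nonnegative summands in \eqref{def_E}; and \eqref{ineq_summability} follows by telescoping \eqref{ineq_main_Ed} over $k=1,\dots,\ks-1$ together with $\Ed(\ks)\ge 0$.

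Regarding the endpoint issues you isolate: they are genuine, but you should know that the paper does not resolve them either. Its proof consists of the assertions that \eqref{term_E_middle} is positive for all $k\le\ks-1$ (which, exactly as you note, fails at $k=0$, where the term equals $-2\omega(\alpha-1)\Dd<0$ for $\delta>0$), that the pointwise bounds ``immediately follow from the definition'' (which for \eqref{ineq_wkd} at $k=\ks$ would need the sign of $\Thetad(\xksd)-\Thetad(\xs)$, not controlled by the stopping rule, since at $\ks$ the discrepancy is only bounded from above), and that $\Ed(\ks)\ge0$ (stated without justification). So your conditional fix at $k=0$ and your warning at $k=\ks$ constitute a more careful reading, not a missing idea relative to the paper. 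One caution: your parenthetical claim that the $k=0$ nondegeneracy is ``implicit in $\ks\ge1$'' does not follow from the literal stopping rule, whose threshold is undefined (infinite) at $k=0$; either such a condition on $x_0$ must be imposed, or one settles for what \eqref{ineq_main_Ed} actually yields at $k=0$, namely $\Ed(1)\le\Ed(0)+2\omega(\alpha-1)\Dd$ and hence $\Ed(k)\le\Ed(0)+2\omega(\alpha-1)\Dd$ for $1\le k\le\ks$, an $\LandauO(\delta)$ perturbation that is harmless for the asymptotic use made of this proposition in Theorem~\ref{thm_main}.
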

\begin{proof}
Due to the definition of the stopping rule \eqref{stoprule} and the arguments preceding it, the term \eqref{term_E_middle} is positive for all $k\leq\ks-1$. Hence, due to \eqref{ineq_main_Ed}, $\Ed(k)$ is non-increasing for all $k \leq \ks$ and in particular, $\Ed(k) \leq \Ed(0)$. From this observation, \eqref{ineq_descent_inex} and \eqref{ineq_wkd} immediately follow from the definition \eqref{def_E} of $\Ed(k)$.

Furthermore, rearranging \eqref{ineq_main_Ed} we have
\begin{equation*}
\frac{2 \omega(\alpha-3)}{\alpha-1} \left( k \left( \Thetad(\xkd) -  \Thetad(\xs) \right) - \frac{(k+\alpha-1)^2}{(\alpha-3)} \Dd \right)
\leq \Ed(k) - \Ed(k+1)  \,.
\end{equation*}
Now, summing over this inequality and using telescoping and the fact that $\Ed(\ks) \geq 0$ we immediately arrive at \eqref{ineq_summability}, which concludes the proof.
\end{proof}

From the above proposition, we are able to deduce two interesting corollaries. 
\begin{corollary}
Under the assumptions of Proposition~\ref{prop_conv_inex} there holds
\begin{equation}\label{ineq_desc_inex_2}
\norm{F(\xkd)-\yd}^2 \leq \frac{2(\alpha-1) \Ed(0)}{\omega(k+\alpha-2)^2} + \delta^2\,, \qquad 0 \leq k \leq \ks \,.
\end{equation}
\end{corollary}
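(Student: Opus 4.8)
The plan is to read the claim off directly from the energy estimate \eqref{ineq_descent_inex} of Proposition~\ref{prop_conv_inex} by converting the bound on functional values into a bound on the residual. First I would recall that by Lemma~\ref{lem_within} we have $\xkd \in \Btr(x_0)$ for every $k$, and that by Assumption~\ref{ass_main} the solution satisfies $\xs \in \Br(x_0) \subset \Btr(x_0)$. Consequently the indicator functional $\Psi$ defined by \eqref{def_psi} vanishes at both $\xkd$ and $\xs$, so that $\Thetad(\xkd) = \Phid(\xkd)$ and $\Thetad(\xs) = \Phid(\xs)$. This is the only point requiring any care, and it is precisely the membership statement supplied by Lemma~\ref{lem_within} together with Assumption~\ref{ass_main}.

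With this identification, \eqref{ineq_descent_inex} reads
\begin{equation*}
\foh\norm{F(\xkd)-\yd}^2 - \foh\norm{F(\xs)-\yd}^2 = \Phid(\xkd) - \Phid(\xs) \leq \frac{(\alpha-1)\Ed(0)}{2\omega(k+\alpha-2)^2}\,.
\end{equation*}
Next I would use that $\xs$ is an exact solution, i.e. $F(\xs)=y$, together with the noise bound $\norm{y-\yd}\leq\delta$ from Assumption~\ref{ass_main}, to estimate the subtracted term as $\foh\norm{F(\xs)-\yd}^2 = \foh\norm{y-\yd}^2 \leq \foh\delta^2$.

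Inserting this, moving the term to the right-hand side, and multiplying through by $2$ yields
\begin{equation*}
\norm{F(\xkd)-\yd}^2 \leq \frac{(\alpha-1)\Ed(0)}{\omega(k+\alpha-2)^2} + \delta^2\,,
\end{equation*}
which is in fact slightly sharper than the asserted inequality; the factor $2$ appearing in \eqref{ineq_desc_inex_2} is harmless slack, as all quantities involved are nonnegative. There is no genuine obstacle here: the corollary is an immediate restatement of the already-established descent estimate, the only ingredients being the vanishing of $\Psi$ on $\Btr(x_0)$ and the elementary noise estimate $\Phid(\xs)=\foh\norm{y-\yd}^2\leq\foh\delta^2$.
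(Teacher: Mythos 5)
Your proof is correct and takes essentially the same route as the paper's own: both use $\xkd,\xs \in \Btr(x_0)$ to identify $\Thetad$ with $\Phid$ at these points, insert this into \eqref{ineq_descent_inex}, and bound $\Phid(\xs) = \foh\norm{y-\yd}^2 \leq \foh\delta^2$. Your bookkeeping in fact gives the sharper constant $(\alpha-1)\Ed(0)$ in place of $2(\alpha-1)\Ed(0)$, and your observation that the paper's extra factor of $2$ is harmless slack is accurate, since $\Ed(0)\geq 0$ under the standing assumptions.
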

\begin{proof}
Using the fact that both $\xkd, \xs \in \Btr(x_0)$, it follows from the definition of $\Thetad$ that $\Thetad(\xkd) = \Phid(\xkd)$ and $\Thetad(\xs) = \Phid(\xs)$. Hence, inequality \eqref{ineq_descent_inex} yields
\begin{equation*}
\norm{F(\xkd)-\yd}^2 \leq \frac{2(\alpha-1) \Ed(0)}{\omega(k+\alpha-2)^2} + \norm{y-\yd}^2\,, \qquad 0 \leq k \leq \ks \,,
\end{equation*}
from which, using $\norm{y-\yd}\leq \delta$, the statement immediately follows.
\end{proof}
\begin{corollary}\label{cor_ks_bound}
Under the assumptions of Proposition~\ref{prop_conv_inex} there holds
\begin{equation*}
\ks (\ks-1) 
\leq \left(\frac{2(\alpha-1)\Ed(1)}{\omega(\alpha-3)(\tau^2-1)}\right) \frac{1}{\delta^2}
 \,.
\end{equation*} 
\end{corollary}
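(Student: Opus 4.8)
The plan is to combine the summability estimate \eqref{ineq_summability} of Proposition~\ref{prop_conv_inex} with the lower bound on the residual guaranteed by the stopping rule \eqref{stoprule}, and then to evaluate the resulting arithmetic sum.

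First I would fix an index $k$ with $1 \leq k \leq \ks - 1$ and derive a lower bound for the summand appearing in \eqref{ineq_summability}. Since both $\xkd$ and $\xs$ lie in $\Btr(x_0)$ by Lemma~\ref{lem_within}, there holds $\Thetad(\xkd) = \Phid(\xkd)$ and $\Thetad(\xs) = \Phid(\xs) = \foh \norm{y-\yd}^2 \leq \foh \delta^2$. Moreover, the definition of $\ks$ in \eqref{stoprule} ensures that $\norm{F(\xkd)-\yd}^2 > \frac{2(k+\alpha-1)^2}{k(\alpha-3)}\Dd + \tau^2 \delta^2$ for every $k < \ks$. Substituting these facts into $\Thetad(\xkd) - \Thetad(\xs) = \foh \norm{F(\xkd)-\yd}^2 - \foh \norm{y-\yd}^2$ and using $\norm{y-\yd}^2 \leq \delta^2$ would yield
\begin{equation*}
\Thetad(\xkd) - \Thetad(\xs) > \frac{(k+\alpha-1)^2}{k(\alpha-3)}\Dd + \foh (\tau^2-1)\delta^2 \,.
\end{equation*}

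Next I would multiply through by $k$ and subtract $\frac{(k+\alpha-1)^2}{\alpha-3}\Dd$, which isolates exactly the summand in \eqref{ineq_summability} and gives
\begin{equation*}
k\left(\Thetad(\xkd) - \Thetad(\xs)\right) - \frac{(k+\alpha-1)^2}{\alpha-3}\Dd > \foh\, k (\tau^2-1)\delta^2 \,.
\end{equation*}
Here the $\Dd$-contributions cancel precisely; this is the very reason the stopping rule was designed with the factor $\frac{2(k+\alpha-1)^2}{k(\alpha-3)}$ in front of $\Dd$. Summing this lower bound over $k = 1, \dots, \ks-1$ and inserting the upper bound from \eqref{ineq_summability} produces
\begin{equation*}
\foh (\tau^2-1)\delta^2 \sum_{k=1}^{\ks-1} k \leq \frac{(\alpha-1)\Ed(1)}{2\omega(\alpha-3)} \,.
\end{equation*}

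Finally, evaluating $\sum_{k=1}^{\ks-1} k = \foh \ks(\ks-1)$ and solving for $\ks(\ks-1)$ gives the claimed estimate. The computation is entirely elementary; the only point requiring attention is to keep track of the gap between the $\tau^2 \delta^2$ enforced by the stopping rule and the $\delta^2$ bound on $\norm{y-\yd}^2$, since it is exactly this gap that supplies the positive factor $(\tau^2-1)$ in the denominator. I expect no genuine obstacle here, as the corollary follows directly from the inequalities established above.
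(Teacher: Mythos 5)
Your proposal is correct and follows essentially the same route as the paper's own proof: both lower-bound the summand in \eqref{ineq_summability} via the identity $\Thetad(\xkd)-\Thetad(\xs) = \foh\norm{F(\xkd)-\yd}^2 - \foh\norm{y-\yd}^2$, invoke the stopping rule \eqref{stoprule} so that the $\Dd$-terms cancel and leave the bound $\foh k(\tau^2-1)\delta^2$, and then sum over $k=1,\dots,\ks-1$. The only cosmetic difference is that you substitute the stopping-rule bound before multiplying by $k$, whereas the paper does it after; the arithmetic is identical.
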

\begin{proof}
Using the fact that both $\xkd, \xs \in \Btr(x_0)$, it follows from the definition of $\Thetad$ that $\Thetad(\xkd) = \Phid(\xkd)$ and $\Thetad(\xs) = \Phid(\xs)$
Hence, it follows with $\norm{y-\yd}\leq \delta$ that
\begin{equation*}
\begin{split}
& k \left( \Thetad(\xkd) -  \Thetad(\xs) \right) - \frac{(k+\alpha-1)^2}{(\alpha-3)} \Dd 
\\
&  \qquad \geq \frac{k}{2} \left( \norm{F(\xkd)-\yd}^2 - \delta^2 \right) - \frac{(k+\alpha-1)^2}{(\alpha-3)} \Dd \,. 
\end{split}
\end{equation*} 
Together with the definition of the stopping rule \eqref{stoprule}, this implies that for all $k \leq \ks-1$
\begin{equation*}
\begin{split}
k \left( \Thetad(\xkd) -  \Thetad(\xs) \right) - \frac{(k+\alpha-1)^2}{(\alpha-3)} \Dd  > k\frac{(\tau^2-1)\delta^2 }{2}
\end{split}
\end{equation*}
Using this in \eqref{ineq_summability} yields
\begin{equation*}
\frac{(\tau^2-1)\delta^2 }{2} \sum\limits_{k=1}^{\ks-1} k
\leq \frac{(\alpha-1)\Ed(1)}{2\omega(\alpha-3)}
 \,,
\end{equation*} 
from which the statement now immediately follows.
\end{proof}
Again, this shows that $\ks = \LandauO(\delta^{-1})$, i.e., $\ks \leq c \delta^{-1}$, however this time the constant $c$ does not depend on $c_1$ and $c_2$, an observation which we use when analysing \eqref{Nesterov} under slightly different assumptions then Assumption~\ref{ass_main} below.

We are now able to prove one of our main results:
\begin{theorem}\label{thm_main}
Let Assumption~\ref{ass_main} hold and let the iterates $\xkd$ and $\zkd$ be defined by \eqref{Nesterov}. Furthermore, let $\ks = \ks(\delta,\yd)$ be determined by \eqref{stoprule} with some $\tau > 1$ and let the solution set $\S$ be given by \eqref{def_S}. Then there exists an $\xt \in \S$ and a subsequence $\xksdt$ of $\xksd$ which converges weakly to $\xt$ as $\delta \to 0$, i.e.,
\begin{equation*}
\lim_{\delta \to 0} \spr{\xksdt,h} = \spr{\xt,h} \,, \qquad \forall \,  h \in \X \,.
\end{equation*}
If $\S$ is a singleton, then $\xksd$ converges weakly to the then unique solution $\xt \in \S$.
\end{theorem}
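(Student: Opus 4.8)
The plan is to follow the standard strategy for turning an iteration equipped with an a-posteriori stopping rule into a convergent regularization method: combine the uniform boundedness of the iterates, the residual estimates established above, and the weak sequential closedness of $F$. A crucial point is that for exact data Theorem~\ref{thm_exact} only provides \emph{weak} convergence, so throughout I would argue with weak limits and invoke weak sequential closedness in place of any continuity-of-the-iteration argument. Concretely, I would fix an arbitrary sequence $\delta_n \to 0$ with data $y^{\delta_n}$ and write $k_n := \ks(\delta_n,y^{\delta_n})$ for the stopping indices. By Lemma~\ref{lem_within} all iterates $x_{k_n}^{\delta_n}$ lie in the closed, bounded, convex set $\Btr(x_0)$, which is weakly sequentially compact and weakly closed; hence along a subsequence (not relabelled) $x_{k_n}^{\delta_n} \rightharpoonup \xt$ for some $\xt \in \Btr(x_0)$. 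Passing to a further subsequence of the integers $k_n$, I may additionally assume that either $k_n \equiv k$ is constant or $k_n \to \infty$, since every integer sequence admits such a subsequence.

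The decisive step is to show that $F(x_{k_n}^{\delta_n}) \to y$ strongly, and this is where the two regimes must be treated separately. If $k_n \equiv k$ is constant, the stopping rule \eqref{stoprule} gives $\norm{F(x_k^{\delta_n}) - y^{\delta_n}}^2 \leq \tfrac{2(k+\alpha-1)^2}{k(\alpha-3)}\Delta(\delta_n) + \tau^2\delta_n^2$, whose right-hand side tends to $0$ because the prefactor is fixed while $\Delta(\delta_n)\to 0$ and $\delta_n \to 0$. If instead $k_n \to \infty$, I would use the descent estimate \eqref{ineq_desc_inex_2} together with the observation that $\Ed(0)$ is bounded uniformly in $\delta$: since $x_0$ is the $0$th iterate, $\norm{x_0 - \xs}\leq \rho$ and $\Phid(x_0) = \foh\norm{F(x_0)-\yd}^2 \leq \foh(\ob\rho+\delta)^2$, so \eqref{def_E} gives $\Ed(0)\leq C$ for all small $\delta$. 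Then $\norm{F(x_{k_n}^{\delta_n}) - y^{\delta_n}}^2 \leq \tfrac{2(\alpha-1)\Ed(0)}{\omega(k_n+\alpha-2)^2} + \delta_n^2 \to 0$. In either case $\norm{F(x_{k_n}^{\delta_n}) - y} \leq \norm{F(x_{k_n}^{\delta_n}) - y^{\delta_n}} + \delta_n \to 0$.

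With this in hand the conclusion is immediate: since $x_{k_n}^{\delta_n}\rightharpoonup\xt$ with all terms in $\Btr(x_0)$ and $F(x_{k_n}^{\delta_n})\to y$, the weak sequential closedness of $F$ on $\Btr(x_0)$ (Assumption~\ref{ass_main}) forces $F(\xt)=y$; combined with $\xt \in \Btr(x_0)$ this yields $\xt \in \S$ as defined in \eqref{def_S}, which is the first assertion. For the singleton case $\S = \{\xt\}$ I would invoke the usual subsequence argument: every subsequence of $(x_{k_n}^{\delta_n})$ has, by the above, a further subsequence converging weakly to an element of $\S$, which can only be $\xt$, so the whole sequence converges weakly to $\xt$.

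I expect the main obstacle to be precisely the residual step $F(x_{k_n}^{\delta_n})\to y$. Because only weak convergence of the exact iterates is available, one cannot pass through a continuity/stability estimate for the iteration map; instead one must extract the residual decay from the a-priori and a-posteriori bounds in \emph{both} regimes, with the unbounded case $k_n \to \infty$ relying on the uniform boundedness of $\Ed(0)$ and the $(k_n+\alpha-2)^{-2}$ decay. The bounded case is comparatively routine, but the bookkeeping of the subsequence extractions (one for weak compactness, one to separate the two integer regimes) must be arranged carefully so that a single subsequence serves both purposes.
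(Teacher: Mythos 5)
Your proof is correct, and its skeleton coincides with the paper's: split into the two regimes of bounded stopping indices and $k_n \to \infty$, extract a weak limit in $\Btr(x_0)$, show the residuals $F(x_{k_n}^{\delta_n}) \to y$ decay, conclude via weak sequential closedness, and settle the singleton case by the standard sub-subsequence argument. The one genuine divergence is in the bounded regime: the paper fixes $k_n \equiv k$ and invokes the continuous dependence of the iterate $\xkd$ on the data $\yd$ for fixed $k$ (the iteration is a finite composition of continuous maps, since $F$ and $F'$ are continuous and $\prox{.}$ is Lipschitz), thereby obtaining \emph{strong} convergence $x_k^{\delta_n} \to \xkz$ and identifying the limit as the exact-data iterate with $F(\xkz) = y$; you instead reuse the stopping-rule threshold (whose prefactor is a fixed constant once $k$ is frozen) to force residual decay, and then apply weak sequential closedness exactly as in the unbounded regime. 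Your variant is slightly more economical: one mechanism serves both cases, and you never need to justify the stability of the iteration map with respect to the data, a point the paper asserts without proof. What the paper's route buys in exchange is a sharper conclusion in that regime, namely norm convergence to the concrete limit $\xkz$, rather than weak convergence to some unidentified element of $\S$. Your auxiliary steps — the uniform boundedness of $\Ed(0)$ (via $w_0^\delta = x_0$ and $\norm{x_0 - \xs} \leq \rho$), the decay of $\Dd$, and the extraction of a subsequence along which the integer stopping indices are either constant or tend to infinity — are all correct and are also what the paper uses, partly tacitly.
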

\begin{proof}
This proof follows some ideas of \cite{Hanke_Neubauer_Scherzer_1995}. Let $y_n := \ydn$ be a sequence of noisy data satisfying $\norm{y-y_n}\leq \dn$. Furthermore, let $k_n := \ks(\dn,y_n)$ be the stopping index determined by \eqref{stoprule} applied to the pair $(\dn,y_n)$. There are two cases. First, assume that $k$ is a finite accumulation point of $k_n$. Without loss of generality, we can assume that $k_n=k$ for all $n \in \N$. Thus, from \eqref{stoprule}, it follows that
\begin{equation*}
\norm{F(x_k^\dn)-y_n} < \frac{2(k+\alpha-1)^2}{k(\alpha-3)} \Delta(\dn) +  \tau^2 \dn^2 \,,
\end{equation*} 
which, together with the triangle inequality, implies
\begin{equation*}
\norm{F(x_k^\dn)- y} \leq \norm{F(x_k^\dn)-y_n} + \norm{y_n-y} \leq \frac{2(k+\alpha-1)^2}{k(\alpha-3)} \Delta(\dn) +  \tau^2 \dn^2  + \dn
\end{equation*} 
Since for fixed $k$ the iterates $\xkd$ depend continuously on the data $\yd$, by taking the limit $n \to \infty$ in the above inequality we can derive
\begin{equation*}
x_k^\dn \to \xkz \,, 
\qquad 
F (x_k^\dn) \to F(\xkz) = y\,, \quad \text{as } n \to \infty \,. 
\end{equation*}

For the second case, assume that $k_n \to \infty$ as $n \to \infty$. Since $x_{k_n}^\dn \in \Btr(x_0)$, it is bounded and hence, has a weakly convergent subsequence $\xkndnt$, corresponding to a subsequence $\dnt$ of $\dn$ and $\knt := \ks(\dnt,y^\dnt)$. Denoting the weak limit of $\xkndnt$ by $\xt$, it remains to show that $\xt \in \S$. For this, observe that it follows from \eqref{ineq_desc_inex_2} that
\begin{equation*}
\norm{F(\xkndnt)-\yd}^2 \leq \frac{2(\alpha-1) \mathcal{E}^\dnt(0)}{\omega(\knt+\alpha-2)^2} + \dnt^2 \longrightarrow 0 \,, \quad \text{as } n \to \infty \,.
\end{equation*}
where we have used that $\knt \to \infty$ and $\dnt \to 0$  as $n \to \infty$, which follows from the assumption that so do the sequences $k_n$ and $\dn$, and the fact that $\Ed(0)$ stays bounded for $\delta \to 0$. Hence, since we know that $\yd \to y$ as $\delta \to 0$, we can deduce that
\begin{equation*}
F\kl{\xkndnt} \to y \,, \quad \text{as } n \to \infty \,,
\end{equation*}
and therefore, using the weak sequential closedness of $F$ on $\Btr(x_0)$, we deduce that $F(\xt) = y$, i.e., $\xt \in \S$, which was what we wanted to show.

It remains to show that if $\S$ is a singleton then $\xksd$ converges weakly to $\xt$. Since this was already proven above in the case that $k_n$ has a finite accumulation point, it remains to consider the second case, i.e., $k_n \to \infty$. For this, consider an arbitrary subsequence of $\xksd$. Since this sequence is bounded, it has a weakly convergent subsequence which, by the same arguments as above, converges to a solution $\xt \in \S$. However, since we have assumed that $\S$ is a singleton, it follows that $\xksd$ converges weakly to $\xt$, which concludes the proof.
\end{proof}

\begin{remark}
In Theorem~\ref{thm_main}, we have shown weak subsequential convergence to an element $\xt$ in the solution set $\S$. However, this element might be different from the $x_0$-minimum norm solution $\xD$ defined by \eqref{def_xD}, unless of course in case that $\S$ is a singleton.
\end{remark}

\section{Convergence Analysis II}

Some simplifications of the above presented convergence analysis are possible if we assume that instead of only $\Phiz$, all the functionals $\Phid$ are convex. Hence, for the remainder of this section, we work with the following
\begin{assumption}\label{ass_main_2}
Let $\rho$ be a positive number such that $\Bmr(x_0) \subset \D(F)$.
\begin{enumerate}
\item The operator $F : \D(F) \subset \X \to \Y$ is continuously Fr\'echet differentiable between the real Hilbert spaces $\X$ and $\Y$ with inner products $\spr{.,.}$ and norms $\norm{.}$. Furthermore, let $F$ be weakly sequentially closed on $\Btr(x_0)$.
\item The equation $F(x) = y$ has a solution $\xs \in \Br(x_0)$.
\item The data $\yd$ satisfies $\norm{y-\yd} \leq \delta$.
\item The functionals $\Phid$ are convex and have Lipschitz continuous gradients $\nabla \Phid$ with uniform Lipschitz constant $L$ on $\Bmr(x_0)$, i.e.,
\begin{equation}\label{cond_convex_2}
\Phid(\lambda x_1 + (1-\lambda) x_2) \leq \lambda \Phid(x_1) + (1-\lambda) \Phid(x_2)  
\,, 
\qquad \forall x_1, x_2 \in \Bmr(x_0) \,,
\end{equation} 
\begin{equation*}
\norm{\nabla \Phid(x_1) - \nabla \Phid(x_2)} \leq L \norm{x_1 - x_2}
\,,
\qquad \forall x_1, x_2 \in \Bmr(x_0) \,.
\end{equation*}
\item For $\alpha$ in \eqref{Nesterov} there holds $\alpha > 3$ and the scaling parameter $\omega$ satisfies $0 < \omega < \frac{1}{L}$.
\end{enumerate}
\end{assumption}

Note that Assumption~\ref{ass_main_2} is only a special case of Assumption~\ref{ass_main}. Hence, the above convergence analysis presented above is applicable and we get weak convergence of the iterates of \eqref{Nesterov}. However, the stopping rule \eqref{stoprule} depends on the constants $c_1$ and $c_2$ defined by \eqref{def_c1_c2}, which are not always available in practise. Fortunately, using the Assumption~\ref{ass_main_2}, we can get rid of $c_1$ and $c_2$. The key idea is to observe that the following lemma holds:
\begin{lemma}\label{lem_main_noise_2}
Under Assumption~\ref{ass_main_2}, for all $x,z \in \Bmr(x_0)$ there holds
\begin{equation*}
\Thetad(z-\omega\God(z)) \leq \Thetad(x) + \spr{\God(z),z-x} - \frac{\omega}{2} \norm{\God(z)}^2 \,.
\end{equation*}
\end{lemma}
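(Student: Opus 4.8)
The plan is to recognize that this is exactly the noisy analogue of Lemma~\ref{lem_main_zero}, and that its proof goes through essentially verbatim once one checks that the argument there used only two properties of the functional $\Phiz$: the descent inequality stemming from the Lipschitz continuity of $\nabla \Phiz$ together with $\omega < 1/L$, and the convexity of $\Phiz$ on $\Bmr(x_0)$. The whole point of strengthening Assumption~\ref{ass_main} to Assumption~\ref{ass_main_2} is that these two properties now hold for every noisy functional $\Phid$, with the same Lipschitz constant $L$. Equivalently, the lemma asserts that the remainder term $R$ appearing in Lemma~\ref{lem_main_noise} is no longer needed, because one may now apply the convex machinery directly to $\Phid$ rather than transferring it from $\Phiz$.

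Concretely, I would first replay the two building blocks of the proof of Lemma~\ref{lem_main_zero} for $\Phid$: from the Lipschitz bound and $\omega < 1/L$ one obtains the standard quadratic upper bound
\[
\Phid(u) \leq \Phid(v) + \spr{\nabla \Phid(v), u - v} + \frac{1}{2\omega}\norm{u - v}^2,
\qquad \forall\, u, v \in \Bmr(x_0),
\]
while the convexity of $\Phid$ supplies the subgradient inequality. Combining them and specializing to $u = z - \omega \God(z)$, $v = z$, $w = x$ (all of which remain in $\Bmr(x_0)$ whenever $x, z$ do) yields the descent estimate for $\Phid$, exactly as in the exact-data case but with $\Goz$ replaced by $\God$.

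The second block is the $\Psi$-part. Here the key observation is that $\Psi$ is independent of $\delta$ and that, by the very definition \eqref{def_G} of $\God$, one has $z - \omega \God(z) = \prox{z - \omega \nabla \Phid(z)}$. Hence the proximal inequality \cite[Proposition~12.26]{Bauschke_Combettes_2017} applies at the noisy point and produces the same bound on $\Psi(z - \omega \God(z))$ as before, now with $\nabla \Phid$ and $\God$ in place of $\nabla \Phiz$ and $\Goz$. Adding the two estimates and using $\Thetad = \Phid + \Psi$ gives the claim.

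I do not anticipate a genuine obstacle, since the content is essentially that the error term $R$ vanishes in this more restrictive setting. The only step demanding care is the bookkeeping that every inequality in the original argument invoked solely convexity and the Lipschitz-gradient bound, both of which are now available uniformly in $\delta$ by Assumption~\ref{ass_main_2}, together with the identification of $z - \omega \God(z)$ as the proximal point of $z - \omega \nabla \Phid(z)$, which is precisely what legitimizes applying the proximal inequality directly at the noisy iterate.
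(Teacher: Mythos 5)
Your proposal is correct and is exactly the paper's argument: the paper's proof simply states that the result ``follows from the convexity of $\Thetad$ in the same way as in Lemma~\ref{lem_main_zero}'', and your write-up spells out precisely that replay (quadratic upper bound from the Lipschitz gradient with $\omega < 1/L$, the subgradient inequality for the now-convex $\Phid$, and the $\delta$-independent proximal inequality for $\Psi$). No gaps; your bookkeeping that every step of Lemma~\ref{lem_main_zero} uses only these properties is the whole content of the lemma.
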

\begin{proof}
This follows from the convexity of $\Thetad$ in the same way as in Lemma~\ref{lem_main_zero}.
\end{proof}
From the above lemma, it follows that the results of Corollary~\ref{cor_main_noise} and Proposition~\ref{prop_descent} hold with $\Dd = 0$. Therefore, the stopping rule \eqref{stoprule} simplifies to
\begin{equation}\label{stoprule_2}
\norm{F(\xd_{\ks}) - \yd} \leq  \tau \delta < \norm{F(\xkd) - \yd} \,, \qquad \ks \geq k \,,
\end{equation}
for some $\tau > 1$, which is nothing else than the discrepancy principle \eqref{discrepancy_principle}. Note that in contrast to \eqref{stoprule}, only the noise level $\delta$ needs to be known in order to determine the stopping index $\ks$. With the same arguments as above, we are now able to prove our second main result:
\begin{theorem}\label{thm_main_2}
Let Assumption~\ref{ass_main_2} hold and let the iterates $\xkd$ and $\zkd$ be defined by \eqref{Nesterov}. Furthermore, let $\ks = \ks(\delta,\yd)$ be determined by \eqref{stoprule_2} with some $\tau > 1$ and let the solution set $\S$ be given by \eqref{def_S}. Then for the stopping index $\ks$ there holds $\ks = \LandauO(\delta^{-1})$. Furthermore, there exists an $\xt \in \S$ and a subsequence $\xksdt$ of $\xksd$ which converges weakly to $\xt$ as $\delta \to 0$, i.e.,
\begin{equation*}
\lim_{\delta \to 0} \spr{\xksdt,h} = \spr{\xt,h} \,, \qquad \forall \,  h \in \X \,.
\end{equation*}
If $\S$ is a singleton, then $\xksd$ converges weakly to the then unique solution $\xt \in \S$.
\end{theorem}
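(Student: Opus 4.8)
The plan is to run the argument of Theorem~\ref{thm_main} essentially verbatim, exploiting the fact that Lemma~\ref{lem_main_noise_2} allows us to set $\Dd = 0$ throughout. Concretely, under Assumption~\ref{ass_main_2} the energy estimate \eqref{ineq_main_Ed} collapses to
\begin{equation*}
\Ed(k+1) + \frac{2\omega}{\alpha-1}k(\alpha-3)\left(\Thetad(\xkd) - \Thetad(\xs)\right) \leq \Ed(k)\,,
\end{equation*}
which has exactly the shape of the exact-data inequality \eqref{ineq_main_E}. First I would observe that under the discrepancy principle \eqref{stoprule_2} with $\tau > 1$ we have, for every $k < \ks$, that $\norm{F(\xkd)-\yd} > \tau\delta > \delta \geq \norm{y-\yd}$, whence $\Thetad(\xkd) = \foh\norm{F(\xkd)-\yd}^2 \geq \foh\norm{y-\yd}^2 = \Thetad(\xs)$; thus the middle term above is nonnegative, $(\Ed(k))$ is non-increasing for $0\le k\le\ks$, and in particular $\Ed(k)\le\Ed(0)$, which immediately yields the residual decay \eqref{ineq_desc_inex_2} (which, as stated, does not involve $\Dd$ at all).

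For the rate $\ks = \LandauO(\delta^{-1})$ I would reproduce the argument of Corollary~\ref{cor_ks_bound}. Summing the simplified energy inequality and telescoping exactly as in Proposition~\ref{prop_conv_inex} gives $\sum_{k=1}^{\ks-1} k\bigl(\Thetad(\xkd)-\Thetad(\xs)\bigr) \le \frac{(\alpha-1)\Ed(1)}{2\omega(\alpha-3)}$. The quantitative input is the lower bound $\Thetad(\xkd) - \Thetad(\xs) \ge \foh\bigl(\norm{F(\xkd)-\yd}^2 - \delta^2\bigr) > \foh(\tau^2-1)\delta^2$, valid for $k \le \ks-1$ by the strict inequality in \eqref{stoprule_2}. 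Inserting this yields $\foh(\tau^2-1)\delta^2 \sum_{k=1}^{\ks-1}k \le \frac{(\alpha-1)\Ed(1)}{2\omega(\alpha-3)}$, so that $\ks(\ks-1)\delta^2$ stays bounded and $\ks = \LandauO(\delta^{-1})$. Because $\Dd=0$, the stopping rule no longer refers to $c_1,c_2$, so this bound is genuinely independent of those constants, which is the whole point of working under Assumption~\ref{ass_main_2}.

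The weak-convergence statement I would then prove exactly as in Theorem~\ref{thm_main}, splitting into two cases for the stopping indices $k_n := \ks(\dn, y_n)$ along a sequence $\dn \to 0$. If $k_n$ has a finite accumulation point $k$, continuity of the finitely many iterates in the data gives $x_k^{\dn}\to\xkz$ with $F(\xkz)=y$, so the limit lies in $\S$. If $k_n \to \infty$, then $x_{k_n}^{\dn}\in\Btr(x_0)$ is bounded and admits a weakly convergent subsequence $\xkndnt \rightharpoonup \xt$; estimate \eqref{ineq_desc_inex_2} then forces $\norm{F(\xkndnt)-y^{\dnt}}\to 0$, using $\knt\to\infty$, $\dnt\to 0$, and the uniform boundedness of $\Ed(0)$ as $\delta\to0$ (since $w_0^\delta = x_0$ and $\Thetad(x_0)-\Thetad(\xs)$ stays bounded). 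Hence $F(\xkndnt)\to y$, and weak sequential closedness of $F$ on $\Btr(x_0)$ gives $F(\xt)=y$, i.e.\ $\xt\in\S$; the singleton case follows by the standard subsequence-of-subsequence argument. I expect the only genuinely delicate point to be the same as in the first analysis: passing the strong residual convergence $F(\xkndnt)\to y$ through the nonlinear operator to conclude $F(\xt)=y$ from mere \emph{weak} convergence of the arguments, which is precisely where weak sequential closedness is indispensable. Everything else is a transcription of the earlier proof with $\Dd$ deleted.
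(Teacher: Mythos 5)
Your proposal is correct and takes essentially the same route as the paper: the published proof simply states that the argument of Theorem~\ref{thm_main} carries over with $\Dd = 0$ (via Lemma~\ref{lem_main_noise_2}), with both the well-definedness of $\ks$ and the rate $\ks = \LandauO(\delta^{-1})$ supplied by the summability argument of Corollary~\ref{cor_ks_bound}, exactly as you do. The only point worth making explicit is that your bound $\tfrac{1}{2}(\tau^2-1)\delta^2 \sum_{k=1}^{\ks-1} k \leq \frac{(\alpha-1)\Ed(1)}{2\omega(\alpha-3)}$ is also what guarantees that $\ks$ is finite (i.e.\ well-defined) in the first place, since Lemma~\ref{lem_ks_well} is no longer applicable to the stopping rule \eqref{stoprule_2}.
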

\begin{proof}
The proof of this theorem is analogous to the proof of Theorem~\ref{thm_main}. The only main difference is the well definedness of $\ks$, which now cannot be derived from Lemma~\ref{lem_ks_well} but follows from \eqref{ineq_summability} by Corollary~\ref{cor_ks_bound}, which also yields $\ks = \LandauO(\delta^{-1})$. 
\end{proof}

\begin{remark}
Note that since Theorem~\ref{thm_main_2} only gives an asymptotic result, i.e., for $\delta \to 0$, the requirement in Assumption~\ref{ass_main_2} that the functionals $\Phid$ have to be convex for all $\delta > 0$ can be relaxed to $0 \leq \delta \leq \bd$, as long as we only consider data $\yd$ satisfying the noise constraint $\norm{y-\yd} \leq \delta \leq \bd$.
\end{remark}
\begin{remark}
Note that if the functionals $\Phid$ are globally convex and uniformly Lipschitz continuous, which is for example the case if $F$ is a bounded linear operator, then one can choose $\rho$ arbitrarily large in the definition of $\Psi$. Now, as we have seen above, the proximal mapping $\prox{.}$ is nothing else than the projection onto $\Btr(x_0)$. This implies that for practical purposes, $\prox{.}$ may be dropped in \eqref{Nesterov}, which means that one effectively uses \eqref{Nesterov_1} instead of \eqref{Nesterov}.
\end{remark}

\section{Strong Convexity and Nonlinearity Conditions}\label{sect_strong_conv}

In this section, we consider the question of strong convergence of the iterates of \eqref{Nesterov} and comment on the connection between the assumption of local convexity and the (weak) tangential cone condition.

Concerning the strong convergence of the iterates of \eqref{Nesterov} and \eqref{Nesterov_1}, note that it could be achieved if the functional $\Phiz$ were locally strongly convex, i.e., if
\begin{equation}\label{conv_strong_local}
\begin{split}
\spr{F'(x_1)^*(F(x_1)-y) - F'(x_2)^*(F(x_2)-y),x_1-x_2} \geq \alpha \norm{x_1 - x_2}^2 \,, 
\\
\forall \, x_1,x_2 \in \Btr(x_0) \,,
\end{split}
\end{equation} 
since then, for the choice of $x_1 = \xkz$ and $x_2 = \xs$, one gets
\begin{equation*}
\alpha \norm{\xkz - \xs} \leq \spr{F'(\xkz)^*(F(\xkz)-y),\xkz - \xs} \leq 2\ob\rho \norm{F(\xkz)-y} \,,
\end{equation*}
from which, since we have $\norm{F(\xkz)-y} \to 0$ as $\delta \to 0$, it follows that $\xkd$ converges strongly to $\xs$ as $\delta \to 0$. Hence, retracing the proof of Theorem~\ref{thm_main}, one would get
\begin{equation*}
\lim\limits_{\delta \to 0} \xksd = \xs \,.
\end{equation*}
Unfortunately, already for linear ill-posed operators $F = A$, strong convexity of the form \eqref{conv_strong_local} cannot be satisfied, since then one would get
\begin{equation*}
\norm{Ax_1 - Ax_2} \geq \alpha \norm{x_1 - x_2} \,, \qquad \forall \, x_1 , x_2 \in \Btr(x_0) \,,
\end{equation*}
which already implies the well-posedness of $Ax = y$ in $\Btr(x_0)$. However, defining 
\begin{equation}\label{def_Mt}
\Mt(A) := \left\{ x \in \Btr \, \vert \, \exists w \in \Y \,, \norm{w} \leq \tau \,, \, x - \xD = A^*w   \right\} \,,
\end{equation}
it was shown in \cite[Lemma~3.3]{Hofmann_Scherzer_1998} that there holds
\begin{equation*}
\norm{x - \xD}^2 \leq \tau \norm{A x - A \xD} \,, \qquad \forall \, x \in  \Mt(A) \,,
\end{equation*}
Hence, if one could show that $\xkz \in \Mt$ for some $\tau > 0$ and all $k \in \N$, then it would follow that
\begin{equation*}
\norm{\xkz - \xD}^2 \leq \tau \norm{A \xkz - y} \,, \qquad \forall \, x \in  \Mt(A) \,,
\end{equation*}
from which strong convergence of $\xkz$, and consequently also of $\xksd$ to $\xD$ would follow. In essence, this was done in \cite{Neubauer_2017} with tools from spectral theory in the classical framework for analysing linear ill-posed problem \cite{Engl_Hanke_Neubauer_1996} under the source condition $\xD \in \Range(A^*)$.

\begin{remark}
Note that it is sometimes possible, given weak convergence of a sequence $x_k \in \X$ to some element $\xt \in \X$, to infer strong convergence of $x_k$ to $\xt$ in a weaker topology. For example, if $x_k \in \HoO$ converges weakly to $\xt$ in the $\HoO$ norm, then it follows that $x_k$ converges strongly to $\xt$ with respect to the $\LtO$ norm. Many generalizations of this example are possible. Note further that in finite dimensions, weak and strong convergence coincide.
\end{remark}

In the remaining part of this section, we want to comment on the connection of the local convexity assumption \eqref{cond_convex} to other nonlinearity conditions like \eqref{cond_nonlin} and \eqref{cond_nonlin_weak} commonly used in the analysis of nonlinear-inverse problems.

First of all, note that due to the results of Kindermann \cite{Kindermann_2017}, we know that both convexity and the (weak) tangential cone condition imply weak convergence of Landweber iteration \eqref{Landweber}. However, it is not entirely clear in which way those conditions are connected.

One connection of the two conditions was given in \cite{Scherzer_1995}, where it was shown that the nonlinearity condition implies a certain directional convexity condition. Another connection was provided in \cite{Kindermann_2017}, where it was shown that the tangential cone condition implies a quasi-convexity condition. However, it is not clear whether or not the tangential cone condition implies convexity or not. What we can say is that convexity does not imply the (weak) tangential cone condition, which is shown in the following

\begin{example}
Consider the operator $F: H^1[0,1] \to L^2[0,1]$ defined by
	\begin{equation}
		F(x)(s):= \int_{0}^{s} x(t)^2 \, dt \,.
	\end{equation}
This nonlinear Hammerstein operator was extensively treated as an example problem for nonlinear inverse problems (see for example \cite{Hanke_Neubauer_Scherzer_1995, Neubauer_2016}). It is well known that for this operator the tangential cone condition is satisfied around $\xD$ as long as $\xD \geq c > 0$. However, the (weak) tangential cone condition is not satisfied in case that $\xD \equiv 0$. Moreover, it can easily be seen (for example from \eqref{ineq_conv}) that $\Phiz(x)$ is globally convex, which shows that convexity does not imply the tangential cone condition.
\end{example}

\section{Example Problems}

In this section, we consider two examples to which we apply the theory developed above. Most importantly, we prove the local convexity assumption for both $\Phiz$ and $\Phid$, with $\delta$ small enough. Furthermore, based on these example problems, we present some numerical results, demonstrating the usefulness of method \eqref{Nesterov}, and supporting the findings of \cite{Jin_2016, Hubmer_Ramlau_2017, Neubauer_2017, Hubmer_Neubauer_Ramlau_Voss_2018,Hubmer_Sherina_Neubauer_Scherzer_2018}, which are also shortly discussed.

For this, note that if $F$ is twice continuously Fr\'echet differentiable, then convexity of $\Phid$ is equivalent to positive semi-definiteness of its second Fr\'echet derivative \cite{Rockafellar_Wets_Wets_2009}. More precisely, we have that \eqref{cond_convex_2} is equivalent to 
	\begin{equation}\label{ineq_conv}
		\norm{F'(x)h}^2 + \spr{F(x)-\yd,F''(x)(h,h)} \geq 0 \,, \qquad  \forall \, x \in \Bmr(x_0) \,, \, \forall \, h \in \D(F) \,,
	\end{equation}
which is our main tool for the upcoming analysis.

\subsection{Example 1 - Nonlinear Diagonal Operator}

For our first (academic) example, we look at the following class of nonlinear diagonal operators
	\begin{equation*}
		F: \ell^2 \to \ell^2 \,,
		\qquad 
		x:=(x_n)_{n\in \N} \mapsto \sum\limits_{n=1}^{\infty}  f_n(x_n) \, e_n 
	\end{equation*}
where $\kl{e_n}_{n\in\N}$ is the canonical orthonormal basis of $\ell^2$. These operators are reminiscent of the singular value decomposition of compact linear operators. Here we consider the special choice
	\begin{equation}\label{ex1_def_fn}
		f_n(z) := 
		\frac{1}{n} \cdot \begin{cases}
		z^2 \,, & n \leq M \,,
		\\
		z \,, & n > M \,,
		\end{cases}
	\end{equation}
for some fixed $M > 0$. For this choice, $F$ takes the form
	\begin{equation*}
		F(x) = \sum\limits_{n=1}^{M} \frac{1}{n}\, x_n^2 e_n  + \sum\limits_{n= M+1}^{\infty} \frac{1}{n}\, x_n e_n \,.
	\end{equation*}
It is easy to see that $F$ is a well-defined, twice continuously Fr\'echet differentiable operator with
	\begin{equation*}
	\begin{split}
		F'(x)h &= 2\,  \sum\limits_{n=1}^{M} \frac{1}{n}\, x_n h_n e_n  + 2 \sum\limits_{n= M+1}^{\infty} \frac{1}{n}\, h_n e_n \,,
		\\
		F''(x)(h,w) &= 2 \, \sum\limits_{n=1}^{M} \frac{1}{n}\, h_n w_n e_n \,.
	\end{split}
	\end{equation*} 
Furthermore, note that solving $F(x) = y$ is equivalent to 
	\begin{equation*}
		x_n = 
		n 
		\begin{cases}
		\sqrt{y_n} \,, & n \leq M \,,
		\\
		y_n \,, & n > M \,,
		\end{cases}
	\end{equation*}
from which it is easy to see that we are dealing with an ill-posed problem.

We now turn to the convexity of $\Phid(x)$ around a solution $\xD$.
\begin{proposition}
Let $\xD$ be a solution of $F(x) = y$ such that $\abs{\xD_n} > 0$ holds for all $n \in \{1\,,\dots\,,M\}$. Furthermore, let $\rho > 0$ and $\bd \geq 0$ be small enough such that
	\begin{equation}\label{ex1_ineq_main}
		(\xD_n)^2 \geq 28 \vert \xD_n \vert \rho  + \bd \kl{2 \norm{y}_{\lt} + \bd} \,,
		\qquad \forall \, n \in \kl{1\,,\dots\,,M} \,,
	\end{equation}
and let $x_0 \in \Br(\xD)$. Then for all $0 \leq \delta \leq \bd$, the functional $\Phid(x)$ is convex in $\Bmr(x_0)$.
\end{proposition}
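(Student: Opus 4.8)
The plan is to verify the convexity criterion \eqref{ineq_conv} directly, exploiting the diagonal structure of $F$. Substituting the given formulas for $F'$ and $F''$ into \eqref{ineq_conv} and using orthonormality of the $e_n$, the left-hand side decouples across coordinates: the indices $n>M$ contribute the nonnegative term $\sum_{n>M}\frac{4}{n^2}h_n^2$ while $F''$ vanishes there, so they may simply be discarded. What remains is, for each $n\le M$, the coefficient of $h_n^2$; since $h$ is arbitrary (take $h=e_n$), \eqref{ineq_conv} holds on $\Bmr(x_0)$ if and only if
\begin{equation*}
\frac{6}{n^2}x_n^2-\frac{2}{n}\yd_n\ge 0\,,\qquad\text{i.e.}\qquad 3x_n^2\ge n\yd_n\,,\qquad\forall\,n\le M\,,\ \forall\,x\in\Bmr(x_0)\,.
\end{equation*}
Thus the whole proposition reduces to this family of scalar inequalities.

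Next I would fix $n\le M$ and bound both sides. Since $x_0\in\Br(\xD)$ and $x\in\Bmr(x_0)$ with $\mr=6\rho$, the triangle inequality gives $\norm{x-\xD}\le 7\rho$, hence $\abs{x_n-\xD_n}\le 7\rho$ in every coordinate. Expanding $x_n^2=\xD_n^2+2\xD_n(x_n-\xD_n)+(x_n-\xD_n)^2$ and discarding the nonnegative square yields the coordinatewise lower bound $x_n^2\ge\xD_n^2-14\abs{\xD_n}\rho$. In the exact-data case $\yd=y$ this already suffices: since $ny_n=\xD_n^2$, the requirement becomes $3x_n^2\ge\xD_n^2$, which holds once $\xD_n^2\ge 21\abs{\xD_n}\rho$, matching the $28\abs{\xD_n}\rho$ term of \eqref{ex1_ineq_main} with room to spare.

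For noisy data I would write $n\yd_n=\xD_n^2+n(\yd_n-y_n)$, so that $3x_n^2\ge n\yd_n$ is implied by $2\xD_n^2-42\abs{\xD_n}\rho\ge n(\yd_n-y_n)$ after inserting the lower bound above. The data error is controlled through $\norm{\yd-y}\le\delta\le\bd$ together with the elementary expansion
\begin{equation*}
\norm{\yd}^2-\norm{y}^2=2\spr{y,\yd-y}+\norm{\yd-y}^2\le 2\norm{y}\bd+\bd^2=\bd\kl{2\norm{y}+\bd}\,,
\end{equation*}
which is exactly the second term appearing in \eqref{ex1_ineq_main}. Feeding the hypothesis $\xD_n^2\ge 28\abs{\xD_n}\rho+\bd(2\norm{y}+\bd)$ into $2\xD_n^2-42\abs{\xD_n}\rho$ leaves a surplus of $14\abs{\xD_n}\rho+2\bd(2\norm{y}+\bd)$, which I would then check dominates the residual data error, yielding nonnegativity of every coordinate coefficient and hence convexity of $\Phid$ on $\Bmr(x_0)$.

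The step I expect to be the main obstacle is precisely this control of the data term. Because the fidelity pairs $\yd_n$ against the index-dependent factor $\tfrac{2}{n}$, the raw noise contribution $n(\yd_n-y_n)$ is amplified by $n$, whereas the smallness condition offers only the \emph{index-free}, quadratic-in-$\delta$ budget $\bd(2\norm{y}+\bd)$. The delicate point is therefore to absorb the $n$-amplified error into the fixed gap $\xD_n^2-28\abs{\xD_n}\rho$; reconciling the factor $n$ against the global estimate above and confirming that the surplus genuinely suffices for all $n\le M$ is the careful bookkeeping that dictates the exact form of \eqref{ex1_ineq_main}.
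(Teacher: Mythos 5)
Your reduction is correct, and in outline it is the same route the paper takes: verify the second-order criterion \eqref{ineq_conv}, use orthonormality of the $e_n$ to decouple coordinates, discard the nonnegative $n>M$ contribution, and reduce everything to the scalar inequalities $3x_n^2 \ge n\yd_n$ for $n \le M$, $x \in \Bmr(x_0)$. Your exact-data argument is complete: $\abs{x_n - \xD_n} \le 7\rho$ gives $3x_n^2 \ge 3(\xD_n)^2 - 42\abs{\xD_n}\rho \ge (\xD_n)^2 = n y_n$ as soon as $(\xD_n)^2 \ge 21\abs{\xD_n}\rho$, which \eqref{ex1_ineq_main} supplies. But the noisy case is left as a promissory note: you say you "would then check" that the surplus $14\abs{\xD_n}\rho + 2\bd\kl{2\norm{y}_{\lt}+\bd}$ dominates $n(\yd_n - y_n)$, and you correctly flag this as the crux. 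That check is the entire content of the $\delta>0$ claim, and it cannot be carried out: the only coordinatewise control available is $\abs{\yd_n - y_n} \le \norm{\yd - y} \le \delta$, so the error term can be as large as $n\delta \le M\delta$, while the budget offered by \eqref{ex1_ineq_main} is independent of $n$ and of $M$. So the proposal has a genuine gap at exactly this step.

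What makes this instructive is that your bookkeeping is right and the paper's is not. In the paper's proof, the perturbation $\spr{\yd - y, F''(x)(h,h)} = \sum_{n \le M} \tfrac{2}{n}(\yd_n - y_n)h_n^2$ is rewritten as $2\sum_{n\le M}\kl{y_n^2 - (\yd_n)^2}h_n^2$, which yields \eqref{ex1_cond_conv} and is then bounded via \eqref{ex1_ineq_y} by $\delta\kl{2\norm{y}_{\lt}+\delta}$ --- precisely the quantity \eqref{ex1_ineq_main} is calibrated to absorb. But that identification is an algebra error: the noise enters linearly, weighted by $\tfrac{1}{n}$ (equivalently as $n(\yd_n - y_n)$ after clearing the common factor $\tfrac{2}{n^2}$), not as a difference of squares of the data. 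With the correct term, the noisy part of the proposition is false as stated. For instance, take $M = 20$, $\xD_n = 1$ for $n\le 20$ and $0$ otherwise, $\rho = 0.01$, $\bd = \delta = 0.1$, $x_0 = \xD$, and $\yd = y + \delta e_{20}$: then \eqref{ex1_ineq_main} holds (its right-hand side is roughly $0.54 < 1$), yet at the interior point $x = \xD - 5.9\rho\, e_{20}$ the coefficient of $h_{20}^2$ equals $\tfrac{6}{400}(0.941)^2 - \tfrac{2}{20}(0.15) \approx -0.0017 < 0$, so $\Phid$ is not convex on $\Bmr(x_0)$. Your argument (and the proposition) is sound for $\delta = 0$; for $\delta > 0$ it can only be rescued by strengthening \eqref{ex1_ineq_main} so that the noise budget scales with the index, e.g.\ $(\xD_n)^2 \ge 28\abs{\xD_n}\rho + \tfrac{n\bd}{2}$ for $n \le M$, under which your own surplus computation closes the proof.
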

\begin{proof}
Due to \eqref{ineq_conv} it is sufficient to show that
	\begin{equation*}
	\begin{split}
		0 & \leq \norm{F'(x)h}^2 + \spr{F(x)-\yd,F''(x)(h,h)} =
		\\
		& 		= \norm{F'(x)h}^2 +  \spr{F(x)-y,F''(x)(h,h)} +  \spr{\yd - y,F''(x)(h,h)} 
	\end{split}
	\end{equation*}
Using the definition of $F$, the fact that $e_n$ is an orthonormal basis of $\ell^2$ and that $F(\xD) = y$, this inequality can be rewritten into 
	\begin{equation*}
	\begin{split}
		2\kl{\sum\limits_{n=1}^{M}\frac{1}{n^2} x_n^2 h_n^2  
		+ \sum\limits_{n=M+1}^{\infty}\frac{1}{n^2} h_n^2  }    
		+ 2 \sum\limits_{n=1}^M (x_n^2 - (\xD_n)^2)h_n^2
		+ 2 \sum\limits_{n=1}^M (y_n^2 - (\yd_n)^2)h_n^2 \geq 0 \,,
	\end{split}
	\end{equation*}
which after simplification, becomes
	\begin{equation*}
		2 \sum\limits_{n=1}^{M}h_n^2\kl{2 x_n^2 - (\xD_n)^2 + y_n^2 - (\yd_n)^2} 
		+ 
		2 \sum\limits_{n=M+1}^{\infty}\frac{1}{n^2} h_n^2 \geq 0 
	\end{equation*}
Since the right of the above two sums is always positive, in order for the above inequality to be satisfied it suffices to show that
	\begin{equation}\label{ex1_cond_conv}
		2 x_n^2 - (\xD_n)^2 + y_n^2 - (\yd_n)^2 \geq 0 \,,
		\qquad \forall \, n \in \{1\,,\dots\,,M\} \,.
	\end{equation}
Now, since by the triangle inequality we have
	\begin{equation}\label{ex1_ineq_y}
	\begin{split}
		\abs{y_n^2 -(\yd_n)^2} 
		&= \abs{y_n - \yd_n} \abs{y_n + \yd_n}
		\leq \norm{y - \yd}_{\ell^2} \norm{y + \yd}_{\ell^2}
		\\
		& \leq 
		\delta \left( 2 \norm{y}_\lt + \norm{y - \yd}_\lt \right)
		\leq \delta \kl{2 \norm{y}_\lt + \delta} \,,
	\end{split}
	\end{equation}
it follows that in order to prove \eqref{ex1_cond_conv} it suffices to show
	\begin{equation*}
		2 x_n^2 - (\xD_n)^2 - \delta \kl{2 \norm{y}_\lt + \delta}  \geq 0 \,,
		\qquad \forall \, n \in \{1\,,\dots\,,M\} \,.
	\end{equation*}
Now, writing $x = \xD + \eps$, this can be rewritten into 	
	\begin{equation*}
		(\xD_n)^2 + 4 \xD_n \eps_n + 2 \eps_n^2 - \delta \kl{2 \norm{y}_\lt + \delta}  \geq 0 \,,
		\qquad \forall \, n \in \{1\,,\dots\,,M\} \,.
	\end{equation*}
Since $\eps_n^2 \geq 0$, the above inequality is satisfied given that
	\begin{equation*}
		(\xD_n)^2 - 4 \abs{\xD_n} \abs{\eps_n}  - \delta \kl{2 \norm{y}_\lt + \delta}  \geq 0 \,,
		\qquad \forall \, n \in \{1\,,\dots\,,M\}\,.
	\end{equation*}
However, since $\abs{\eps_k} \leq \norm{\eps}_{\ell^2} = \norm{x- \xD}_{\lt} \leq \norm{x - x_0}_{\lt}  + \norm{x_0 - \xD}_{\lt}  \leq 7 \rho$, this follows immediately from \eqref{ex1_ineq_main}, which concludes the proof.
\end{proof}	

\begin{remark}
Due to $\abs{\xD_k} \leq \norm{\xD_k}_\lt$, condition \eqref{ex1_ineq_main} is satisfied given that
	\begin{equation*}
		\min\limits_{n =1\,,\dots\,,M} \left\{(\xD_n)^2\right\} \geq 28 \norm{\xD}_\lt \rho + \bd\kl{\norm{y}_\lt + \bd} \,,
	\end{equation*}
which can always be satisfied given that $\abs{\xD_n} > 0$ for all $n \in \{1\,,\dots\,,M\}$. 
\end{remark}

After proving local convexity of the residual functional around the solution, we now proceed to demonstrate the usefulness of \eqref{Nesterov} based on the following numerical

\begin{example}\label{ex1} For this example we choose $f_n$ as in \eqref{ex1_def_fn} with $M=100$. For the exact solution $\xD$ we take the sequence $\xD_n = 100/n$ which leads to the exact data 
	\begin{equation*}
		y_n = F(\xD)_n = 
		\begin{cases}
		10^4/n^3 \,, & n \leq 100 \,,
		\\
		10^2/n^2 \,, & n > 100 \,.
		\end{cases}
	\end{equation*}
Hence, condition \eqref{ex1_cond_conv} reads as follows 
	\begin{equation*}
		10^4/n^2 \geq 28 (10^2/ n) \rho + \bd(2 \norm{y}_\lt + \bd) \,, \qquad \forall \, n \in \{1\,, \dots\,, 100\} \,.
	\end{equation*}
Therefore, the functional $\Phiz$ is convex in $\Bmr(x_0)$ given that $\rho \leq 1/28 \approx 0.036$, which is for example the case for the choice
	\begin{equation}\label{ex1_def_x0}
		x_0 = \xD + \kl{(-1)^n \frac{ \rho \sqrt{6}}{\pi n} }_{n \in \N}\,.
	\end{equation} 
Furthermore, for any noise level $\bd$ small enough, one has that for all $\delta \leq \bd$ the functional $\Phid$ is convex in $\Bmr(x_0)$ as long as
	\begin{equation*}
		\rho \leq \frac{10^4/n - n \, \bd(2\norm{y}_\lt + \bd)}{2800} \,, \qquad \forall \, n \in \{1\,, \dots\,, 100\} \,,
	\end{equation*}
which for example is satisfied if 
	\begin{equation*}
		\rho \leq \frac{1 - \bd(2\norm{y}_\lt + \bd)}{28} \,.
	\end{equation*}
	
For numerically treating the problem, instead of considering full sequences $x = \kl{x_n}_{n \in \N}$, we only consider $\xv = \kl{x_n}_{n=1,\dots,N}$ where we choose $N=200$ in this example. This means that we are considering the following discretized version of $F$:
	\begin{equation*}
		F_n(\xv) = \sum\limits_{n=1}^{100} \frac{1}{n}\, x_n^2 e_n  + \sum\limits_{n= 101}^{200} \frac{1}{n}\, x_n e_n \,.
	\end{equation*}
We now compare the behaviour of method \eqref{Nesterov} with its non-accelerated Landweber counterpart \eqref{Landweber} when applied to the problem with $\xD$ and $x_0$ as defined above. For both methods, we choose the same scaling parameter $\omega = 3.2682*10^{-5}$ estimated from the norm of $F(\xD)$ and we stop the iteration with the discrepancy principle \eqref{discrepancy_principle} with $\tau = 1$. Furthermore, random noise with a relative noise level of $0.001\%$ was added to the data to arrive at the noisy data $\yd$ and, following the argument presented after \eqref{thm_main_2} and since the iterates $\xkd$ remain bounded even without it, we drop the proximal operator $\prox{.}$ in \eqref{Nesterov}. The results of the experiments, computed in MATLAB, are displayed in Table~\ref{table_ex1}. The speedup both in time and in the number of iterations achieved by Nesterov's acceleration scheme is obvious. Not only does \eqref{Nesterov} satisfy the discrepancy principle much earlier than \eqref{Landweber}, but also the relative error is even a bit smaller for method \eqref{Nesterov}.

\begin{table}[ht!]
\centering
\begin{tabular}{| c | c | c | c |}
\hline 
\rule{0pt}{15pt} Method & \multicolumn{1}{c|}{$k_*$}  & \multicolumn{1}{c|}{Time}  & $\norm{\xD - \xkd }/\norm{\xD}$ \\
\hline
Landweber & 82 & 0.057 s & 0.0109 \% \\
\hline
Nesterov & 23 & 0.019 s & 0.0108 \% \\
\hline
\end{tabular} 
\caption{Comparison of Landweber iteration \eqref{Landweber} and its Nesterov accelerated version \eqref{Nesterov} when applied to the diagonal operator problem considered in Example~\ref{ex1}.}
\label{table_ex1}
\end{table}

\end{example}

\subsection{Example 2 - Auto-Convolution Operator}

Next we look at an example involving an auto-convolution operator. Due to its importance in laser optics, the auto-convolution problem has been extensively studied in the literature \cite{Anzengruber_Buerger_Hofmann_Steinmeyer_2016,Birkholz_Steinmeyer_Koke_Gerth_Buerger_Hofmann_2015, Gerth_2014}, its ill-posedness has been shown in \cite{Buerger_Hofmann_2015,Fleischer_Hofmann_1996, Gorenflo_Hofmann_1994} and its special structure was successfully exploited in \cite{Ramlau_2003}. For our purposes, we consider the following version of the auto-convolution operator
	\begin{equation}\label{ex2_def_F}
		F: \LtO \to \LtO \,, 
		\qquad 
		F(x)(s):= 
		(x \ast x) (s) 
		:= \int\limits_{0}^{1} x(s-t)x(t)\,dt \,,
	\end{equation}
where we interpret functions in $\LtO$ as $1$-periodic functions on $\R$. For the following, denote by $(\ek)_{k\in \Z}$ the canonical real Fourier basis of $\LtO$, i.e., 
	\begin{equation*}
		\ek(t) := 
		\begin{cases}
		1 \,, & k = 0 \,,
		\\
		\sqrt{2} \sin(2\pi k t) \,, & k \geq 1 \,,
		\\
		\sqrt{2} \cos(2\pi k t) \,, & k \leq -1 \,,
		\end{cases}
		\qquad t \in (0,1) \,,
	\end{equation*}
and by $x_k := \spr{x,\ek}$ the Fourier coefficients of $x$. It follows that
	\begin{equation}\label{ex2_eq_conv}
		x \ast w = \sum\limits_{k\in \Z} x_k w_k \ek \,.
	\end{equation}
It was shown in \cite{Buerger_Flemming_Hofmann_2016} that if only finitely many Fourier components $x_k$ are non-zero, then a variational source condition is satisfied leading to convergence rates for Tikhonov regularization. We now use this assumption of a sparse Fourier representation to prove convexity of $\Phid$ for the auto-convolution operator in the following 

\begin{proposition}
Let $\xD$ be a solution of $F(x) = y$ such that there exists an index set $\LN \subset \Z$ with $\abs{\LN} = N$ such that for the Fourier coefficients $\xD_k$ of $\xD$ there holds
	\begin{equation*}
		\xD_k = 0 \,, \qquad \forall \, k \in \Z \setminus \LN \,.
	\end{equation*} 
Furthermore, let $\rho > 0$ and $\bd \geq 0$ be small enough such that
	\begin{equation}\label{ex2_ineq_main}
		(\xD_k)^2 \geq 28 \vert \xD_k\vert \, \rho  + \bd \kl{2 \norm{y}_{\Lt} + \bd} \,, \qquad \forall \, k \in \LN
	\end{equation}
and let $x_0 \in \Br(\xD)$. Then for all $0 \leq \delta \leq \bd$, the functional $\Phid(x)$ is convex in $\Bmr(x_0)$.
\end{proposition}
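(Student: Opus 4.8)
The plan is to follow the template of the preceding diagonal-operator proposition, exploiting the fact that in the Fourier basis $(\ek)$ the auto-convolution operator is itself diagonal. Indeed, by \eqref{ex2_eq_conv} we have $F(x) = x \ast x = \sum_{k \in \Z} x_k^2 \ek$, and since $x \mapsto x \ast x$ is a symmetric bilinear map, differentiating gives $F'(x)h = 2(x \ast h) = 2\sum_k x_k h_k \ek$ and $F''(x)(h,h) = 2(h \ast h) = 2\sum_k h_k^2 \ek$. Thus the situation is structurally identical to Example~1, the only essential difference being that every Fourier component is now quadratic in $F$ (there is no linear tail). As in that proof, the starting point is the characterization \eqref{ineq_conv} of convexity through positive semi-definiteness of the second Fr\'echet derivative.

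Next I would insert these expressions into \eqref{ineq_conv}. Using that $(\ek)$ is orthonormal and that $F(\xD) = y$ forces $y_k = (\xD_k)^2$, the inequality \eqref{ineq_conv} collapses to a single Fourier sum of the form $\sum_k h_k^2 \, c_k(x) \geq 0$; since $h \in \D(F)$ is arbitrary (one may take $h = \ek$), this reduces to the coefficient-wise requirement $c_k(x) \geq 0$ for each $k$. Splitting the data via $\yd_k = (\xD_k)^2 + (\yd_k - y_k)$ and estimating the noise contribution exactly as in \eqref{ex1_ineq_y}, i.e. $\abs{y_k^2 - (\yd_k)^2} \leq \delta(2\norm{y}_\Lt + \delta)$, I would reduce the problem to a pointwise inequality of the same shape as \eqref{ex1_cond_conv}.

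It then remains to verify this pointwise inequality on $\Bmr(x_0)$. For $k \in \LN$ the argument is verbatim that of Example~1: writing $x = \xD + \eps$, discarding the nonnegative square term, bounding $\abs{\eps_k} \leq \norm{x - x_0}_\Lt + \norm{x_0 - \xD}_\Lt \leq 7\rho$, and invoking the hypothesis \eqref{ex2_ineq_main} together with $0 \leq \delta \leq \bd$ to conclude. The step I expect to be the main obstacle is the treatment of the indices $k \notin \LN$. Here $\xD_k = 0$ and hence $y_k = 0$, so that, in contrast to the diagonal example whose tail was genuinely linear and therefore automatically convex, one is left with a purely quadratic component whose curvature is governed by $x_k^2$ competing against a noise term of size $(\yd_k)^2 \leq \delta^2$. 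Handling these infinitely many components simultaneously, and showing that the nonnegative curvature furnished by $\norm{F'(x)h}^2$ together with the sparsity $\xD_k = 0$ keeps the $\LandauO(\delta^2)$ noise from spoiling positivity off the support $\LN$, is the delicate core of the estimate and the place where the finite-support assumption is indispensable.
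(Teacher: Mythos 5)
Up to the point where you stop, your plan coincides with the paper's own proof: it uses the second-derivative criterion \eqref{ineq_conv}, the diagonal representation \eqref{ex2_eq_conv} (so that $F'(x)h = 2\, x \ast h$, $F''(x)(h,h) = 2\, h \ast h$ and $y_k = (\xD_k)^2$), reduces everything to a coefficient-wise inequality (the paper's \eqref{ex2_cond_conv}), and for $k \in \LN$ argues exactly as you do: write $x_k = \xD_k + \eps_k$, discard the nonnegative square term, bound $\abs{\eps_k} \leq 7\rho$, and invoke \eqref{ex2_ineq_main}. The genuine gap is that you never prove anything for $k \in \Z \setminus \LN$: declaring this case ``the delicate core of the estimate'' is not a proof of it. In the paper's decomposition there is, in fact, nothing delicate there: for $k \notin \LN$ one has $\xD_k = 0$ and hence also $y_k = (\xD_k)^2 = 0$, so the coefficient inequality \eqref{ex2_cond_conv} collapses to $2x_k^2 + (\yd_k)^2 \geq 0$, which holds trivially. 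No uniformity over the infinitely many off-support indices and no extra help from $\norm{F'(x)h}^2$ is needed; the finite-support assumption enters only through the vanishing of $y_k$ off $\LN$.

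That said, your unease about this step points at something real, and it stems from an inconsistency your write-up inherits by mixing two decompositions: you split the data linearly, $\yd_k = (\xD_k)^2 + (\yd_k - y_k)$, but then quote the difference-of-squares bound \eqref{ex1_ineq_y}; these do not go together. An exact expansion of $\spr{F(x)-\yd,F''(x)(h,h)}$ produces a noise term that is \emph{linear} in $y - \yd$, namely $2\sum_k (y_k - \yd_k)h_k^2$, and not $\sum_k \kl{(\yd_k)^2 - y_k^2}h_k^2$ as it appears in \eqref{ex2_cond_conv}. With the linear term, the coefficient at $x = \xD$ in an off-support direction $h = \ek$, $k \notin \LN$, equals $4(\xD_k)^2 + 2(y_k - \yd_k) = -2\yd_k$, which is negative whenever the noise component $\yd_k$ is positive; since $\xD \in \Br(x_0) \subset \Bmr(x_0)$, convexity of $\Phid$ on $\Bmr(x_0)$ then fails for such data. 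So for $\delta = 0$ your plan (and the paper's proof) closes, but for $\delta > 0$ the case you flagged cannot be repaired by a sharper estimate of the kind you envisage: it is exactly the point at which the paper's one-line dismissal rests on the particular difference-of-squares form of the noise term in \eqref{ex2_cond_conv}, and where additional restrictions on the noise (or a corrected statement) would be needed.
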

\begin{proof}
As in the previous example, we want to show that \eqref{ineq_conv} is satisfied, which, due to \eqref{ex2_eq_conv} and the fact that the $\ek$ form an orthonormal basis is equivalent to
	\begin{equation*}
		\sum_{k\in\Z} x_k^2 h_k^2 
		+ 
		\sum_{k\in\Z} (x_k^2 - (\xD_k)^2) h_k^2  
		+ 
		\sum_{k\in\Z} ((\yd_k)^2 - y_k^2)h_k^2 
		\geq 0 \,,
	\end{equation*}
which, after simplification, becomes
	\begin{equation*}
		\sum_{k\in\Z}  h_k^2 \kl{ 2 x_k^2 
		 - (\xD_k)^2  
		+ (\yd_k)^2 - y_k^2 }
		\geq 0 \,,
	\end{equation*}
and hence, it is sufficient to show that
	\begin{equation}\label{ex2_cond_conv}
		 2 x_k^2 - (\xD_k)^2 + (\yd_k)^2 - y_k^2 \geq 0 \,,
		 \qquad \forall \, k \in \Z \,.  
	\end{equation}
Note that this is essentially the same condition as \eqref{ex1_cond_conv} in the previous example, apart from that here we have to show the inequality for all $k \in \Z$. However, if $k \notin \LN$, then $\xD_k = y_k = 0$ and hence, \eqref{ex2_cond_conv} is trivially satisfied. Hence, it remains to prove \eqref{ex2_cond_conv} only for $k\in\LN$. For this, we write $x_k = \xD_k + \eps_k$, which allows us to rewrite \eqref{ex1_cond_conv} into 
	\begin{equation*}
		(\xD_k)^2 + 4 \xD_k \eps_k + 2\eps_k^2 +  (\yd_k)^2 - y_k^2 \geq 0  \qquad \forall \, k \in \LN \,. 
	\end{equation*}
Now since we get as in \eqref{ex1_ineq_y} that $\abs{y_k^2 - (\yd_k)^2} \leq \delta \kl{2\norm{y}_\Lt + \delta}$, it follows that for the above inequality to be satisfied, it suffices to have 
	\begin{equation*}
		(\xD_k)^2 - 4 \abs{\xD_k}\abs{\eps_k} - \delta \kl{2\norm{y}_\Lt + \delta} \geq 0 
		\,, \qquad \forall \, k \in \LN \,.
	\end{equation*}
However, since $\abs{\eps_k} \leq \norm{\eps}_\Lt = \norm{x-\xD} ß\leq \norm{x- x_0} + \norm{x_0 - \xD}\leq 7 \rho$, this immediately follows from \eqref{ex2_ineq_main}, which completes the proof.
\end{proof}

\begin{remark}
Similarly to the previous example, condition \eqref{ex1_ineq_main} is satisfied given that
	\begin{equation*}
		\min\limits_{k \in \LN } \left\{(\xD_k)^2\right\} \geq 28 \norm{\xD}_\Lt \rho + \bd\kl{\norm{y}_\lt + \bd} \,,
	\end{equation*}
which can always be satisfied given that $\abs{\xD_n} > 0$ for all $n \in \{1\,,\dots\,,M\}$. 
\end{remark}

\begin{remark}
Note that one could also consider $F$ as an operator from $\HoO \to \LtO$, in which case the local convexity of $\Phid$ is still satisfied. Since, as noted in Section~\ref{sect_strong_conv}, weak convergence in $\HoO$ implies strong convergence in $\LtO$, the convergence analysis carried out in the previous section then implies strong subsequential $\LtO$ convergence of the iterates $\xkd$ of \eqref{Nesterov} to an element $\xt \in \S$ from the solution set.
\end{remark}

\begin{example}\label{ex2}
For this example, we consider the auto-convolution problem with exact solution $\xD(s) := 10 + \sqrt{2} \sin(2\pi s)$. It follows that
	\begin{equation*}
		\xD_k = \spr{\xD,\ek} 
		=
		\begin{cases}
		10 \,, & k = 0\,,
		\\
		1 \,, & k = 1\,,
		\\
		0 \,, & \text{else} \,.
		\end{cases}
	\end{equation*}
and therefore, the convexity condition \eqref{ex2_ineq_main} simplifies to the following two inequalities 
	\begin{equation*}
	\begin{split}
		100 \geq 280 \rho + \bd\kl{2\norm{y}_\Lt + \bd} \,,
		\qquad 
		1 \geq 28 \rho + \bd\kl{2\norm{y}_\Lt + \bd} \,.
	\end{split}
	\end{equation*}
Hence, for the noise-free case (i.e., $\bd=0$) the functional $\Phiz$ is convex in $\Bmr(x_0)$ given that $\rho \leq 1/28 \approx 0.036$ and that $x_0 \in \Br(\xD)$, which is for example the case for the choice $x_0 = 10 + \frac{27}{28}\sqrt{2}\sin(2\pi s)$. 

For discretizing the problem, we choose a uniform discretization of the interval $[0,1]$ into $N = 32$ equally spaced subintervals and introduce the standard finite element hat functions $\{\psi_i \}_{i=0}^N$ on this subdivision, which we use to discretize both $\X$ and $\Y$. Following the idea used in \cite{Neubauer_2000}, we discretize $F$ by the finite dimensional operator
	\begin{equation}\label{ex2_helper_int}
		F_N(x)(s) := \sum\limits_{i=0}^{N} f_i(x) \psi_i(s) \,,
		\qquad \text{where} \qquad
		f_i(x) := \int\limits_{0}^{1}x\kl{\tfrac{i}{N} - t}x(t) \, dt \,.
	\end{equation}
For computing the coefficients $f_i(x)$, we employ a $4$-point Gaussian quadrature rule on each of the subintervals to approximate the integral in \eqref{ex2_helper_int}.  

Now we again compare method \eqref{Nesterov} with  \eqref{Landweber}. This time, the estimated scaling parameter has the value $\omega = 0.005$ and random noise with a relative noise level of $0.01\%$ was added to the data. Again the discrepancy principle \eqref{discrepancy_principle} with $\tau = 1$ was used and the proximal operator $\prox{.}$ in \eqref{Nesterov} was dropped. The results of the experiments, computed in MATLAB, are displayed in the left part of Table~\ref{table_ex2}. Again the results clearly illustrate the advantages of Nesterov's acceleration strategy, which substantially decreases the required number of iterations and computational time, while leading to a relative error of essentially the same size as Landweber iteration.

The initial guess $x_0$ used for the experiment above is quite close to the exact solution $\xD$. Although this is necessary for being able to guarantee convergence by our developed theory, it is not very practical. Hence, we want to see what happens if the solution and the initial guess are so far apart that they are no longer within the guaranteed area of convexity. For this, we consider the choice of $\xD(s) = 10 + \sqrt{2}\sin\kl{8 \pi s}$ and $x_0(s) = 10 + \sqrt{2}\sin\kl{2 \pi s}$. The result can be seen in the right part of Table~\ref{table_ex2}. Landweber iteration was stopped after $10000$ iterations without having reached the discrepancy principle since no more progress was visible numerically. Consequently, it is clearly outperformed by \eqref{Nesterov}, which manages to converge already after $797$ iterations, and with a much better relative error. The resulting reconstructions, depicted in Figure~\ref{fig_ex2}, once again underline the usefulness of \eqref{Nesterov}. 

As an interesting remark, note that it seems that for the second example Landweber iteration gets stuck in a local minimum, while \eqref{Nesterov}, after staying at this minimum for a while, manages to escape it, which is likely due to the combination step in \eqref{Nesterov}.

\begin{table}[ht!]
\centering
\resizebox{0.48\columnwidth}{!}{
\begin{tabular}{| c | c | c | c |}
\hline 
\rule{0pt}{15pt} Method & \multicolumn{1}{c|}{$k_*$}  & \multicolumn{1}{c|}{Time}  & $\norm{\xD - \xkd }/\norm{\xD}$ \\
\hline
Landweber & 526 & 57 s & 0.0244 \% \\
\hline
Nesterov & 50 & 6 s & 0.0271 \% \\
\hline
\end{tabular} }
\quad
\resizebox{0.48\columnwidth}{!}{
\begin{tabular}{| c | c | c | c |}
\hline 
\rule{0pt}{15pt} Method & \multicolumn{1}{c|}{$k_*$}  & \multicolumn{1}{c|}{Time}  & $\norm{\xD - \xkd }/\norm{\xD}$ \\
\hline
Landweber & 10000 & 1067 s & 9.57 \% \\
\hline
Nesterov & 797 & 87 s &  0.65\% \\
\hline
\end{tabular} }
\caption{Comparison of Landweber iteration \eqref{Landweber} and its Nesterov accelerated version \eqref{Nesterov} when applied to the auto-convolution problem considered in Example~\ref{ex2} for the choice $\xD(s) = 10 + \sqrt{2}\sin\kl{2 \pi s}$ and $x_0(s) =  10 + \tfrac{27}{28}\sqrt{2}\sin\kl{2 \pi s}$ (left table) and $\xD(s) = 10 + \sqrt{2}\sin\kl{8 \pi s}$ and $x_0(s) = 10 + \sqrt{2}\sin\kl{2 \pi s}$ (right table).}
\label{table_ex2}
\end{table}

	\begin{figure}[ht!]
		\includegraphics[width=0.7\textwidth]{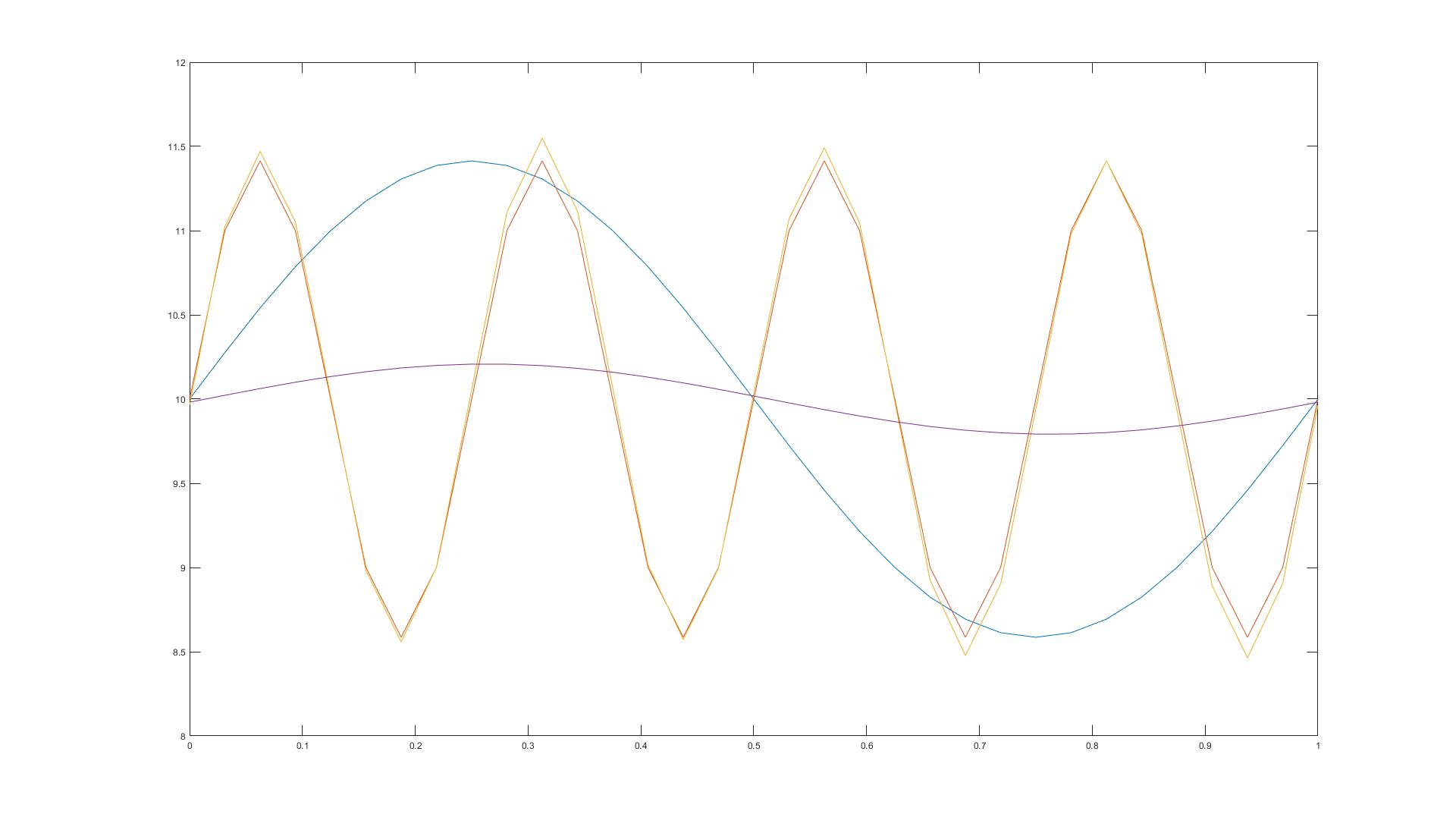}
		\centering
		\caption{Auto-convolution example: Initial guess $x_0$ (blue), exact solution $\xD$ (red), Landweber \eqref{Landweber} reconstruction (purple), Nesterov \eqref{Nesterov} reconstruction (yellow).}
		\label{fig_ex2}
	\end{figure}

\end{example}

\subsection{Further Examples}

Besides the two rather academic examples presented above, we would like to cite a number of other examples where methods like \eqref{Nesterov_1} and \eqref{Nesterov} were successfully used, even though the key assumption of local convexity is not always known to hold for them.

First of all, in \cite{Hubmer_Neubauer_Ramlau_Voss_2018} the parameter estimation problem of Magnetic Resonance Advection Imaging (MRAI) was solved using a method very similar to \eqref{Nesterov}. In MRAI, one aims at estimating the spatially varying pulse wave velocity (PWV) in blood vessels in the brain from Magnetic Resonance Imaging (MRI) data. The PWV is directly connected to the health of the blood vessels and hence, it is used as a prognostic marker for various diseases in medical examinations. The data sets in MRAI are very large, making the direct application of second order methods like \eqref{LM} or \eqref{irGN} difficult. However, since methods like \eqref{Nesterov} can deal with those large datasets, they were used in \cite{Hubmer_Neubauer_Ramlau_Voss_2018} for reconstructions of the PWV.

Secondly, in \cite{Hubmer_Ramlau_2017}, numerical examples for various TPG methods \eqref{Two_Point}, including the iteration \eqref{Nesterov_1}, were presented. Among those is an example based on the imaging technique of Single Photon Emission Computed Tomography (SPECT). Various numerical tests show that among all tested TPG methods, the method \eqref{Nesterov_1} clearly outperforms the rest, even though the local convexity assumption is not known to hold in this case. This is also demonstrated on an example based on a nonlinear Hammerstein operator.

Thirdly, method \eqref{Nesterov_1} was used in \cite{Hubmer_Sherina_Neubauer_Scherzer_2018} to solve a problem in Quantitative Elastography, namely the reconstruction of the spatially varying Lam\'e parameters from full internal static displacement field measurements. Method \eqref{Nesterov_1} was used to obtain all reconstruction results presented in that paper, since ordinary first-order methods like Landweber iteration \eqref{Landweber} were too slow to satisfy the demands required in practise.

Finally, in the numerical examples presented in \cite{Jin_2016}, method \eqref{Nesterov_1} was used to accelerate the employed gradient/Kaczmarz methods. Furthermore, a convergence analysis of \eqref{Nesterov_1} for linear ill-posed problems including numerical examples is given in \cite{Neubauer_2017}.

\section{Support and Acknowledgements}
The authors were partly funded by the Austrian Science Fund (FWF): W1214-N15, project DK8 and F6805-N36, project 5. Furthermore, they would like to thank Dr.\ Stefan Kindermann and Prof.\ Andreas Neubauer for providing valuable suggestions and insights during discussions of the subject.

\bibliographystyle{plain}
{\footnotesize
\bibliography{mybib}
}

\end{document}